\documentclass[12pt,a4paper]{amsart}

\usepackage[T1]{fontenc}
\usepackage{newtxtext}
\usepackage{newtxmath}
\usepackage{microtype}

\usepackage[margin=1in]{geometry}

\usepackage{titlesec}
\titleformat{\section}{\normalfont\large\bfseries}{\thesection}{1em}{}
\titleformat{\subsection}{\normalfont\normalsize\bfseries}{\thesubsection}{1em}{}

\usepackage{amsmath}

\usepackage{amsthm}
\usepackage{amsfonts}
\newcommand{\gbin}[2]{\genfrac{[}{]}{0pt}{}{#1}{#2}}
\usepackage{mathtools}
\usepackage{bm}

\usepackage[hidelinks]{hyperref}
\usepackage[nameinlink]{cleveref}

\usepackage{comment}

\DeclareMathOperator{\Cov}{Cov}
\DeclareMathOperator{\Supp}{Supp}
\DeclareMathOperator{\supp}{supp}
\DeclareMathOperator{\wt}{wt}

\newcommand{\F}{\mathbb{F}}

\newcommand{\R}{\mathbb{R}}

\newcommand{\Harm}{\mathrm{Harm}}

\newcommand{\qbinom}[2]{%
  \genfrac{[}{]}{0pt}{}{#1}{#2}_{q}
}

\theoremstyle{plain}

\newtheorem{theorem}{Theorem}[section]
\newtheorem{lemma}[theorem]{Lemma}
\newtheorem{proposition}[theorem]{Proposition}
\newtheorem{corollary}[theorem]{Corollary}

\theoremstyle{definition}

\newtheorem{definition}[theorem]{Definition}
\newtheorem{example}[theorem]{Example}

\theoremstyle{remark}

\newtheorem{remark}[theorem]{Remark}
\begin{document}

\title[Simonis' Approach of MacWilliams Identity]{Harmonic Higher Weight distributions, Simonis' Approach of MacWilliams Identity and Moments}

\author[Chakraborty]{Himadri Shekhar Chakraborty}

\address
{
	Department of Mathematics, 
	Shahjalal University of Science and Technology\\ Sylhet-3114, Bangladesh\\
}
\email{himadri-mat@sust.edu}

\author[Tanver]{Mehedi Hasan Tanver*}
\thanks{*Corresponding author}
\address
{
	Department of Mathematics, 
	Shahjalal University of Science and Technology\\ Sylhet-3114, 
	Bangladesh\\
}
\email{mhtanver13@gmail.com}

\date{\today}

\keywords{Linear codes, effective length, higher weight distribution, MacWilliams identity, discrete harmonic functions.}

\subjclass[2020]{Primary 94B05; Secondary 11T71.}

\begin{abstract}
We present a combinatorial proof of Simonis type MacWilliams identity for harmonic higher weight distributions of linear codes. 
Furthermore, we investigate the statistical moments of the harmonic
higher weight enumerators for random linear codes. Defining the enumerators via rank functions of the generator matrices of linear codes, we prove that its expectation vanishes for all non-trivial harmonic functions due to the inherent symmetry of random matrices, and we also derive an explicit, non-trivial formula for the covariance.
\end{abstract}

\maketitle

\section{Introduction}

\noindent The weight distribution of a linear code~(cf.~\cite{macwilliams1977}) is a fundamental invariant that reveals much about its combinatorial and geometric structure . The MacWilliams identity~\cite{macwilliams1963} establishes a celebrated algebraic relationship between the weight distribution of a linear code and that of its dual. This identity was later generalized by Wei \cite{wei1991}, who introduced the notion of generalized Hamming weights (or higher weights) to characterize the support weight distribution of subcodes of a given dimension. Subsequently, Kløve \cite{klove1992} and Simonis \cite{simonis1994} derived MacWilliams-type identities for these higher weight distributions. While Kløve's approach relied on extension fields, Simonis provided an elementary proof using a purely combinatorial counting argument over the underlying finite field.
Of late, \"{O}zen and Tekin \cite{ozen2013} defined the higher
weight enumerators of linear codes over finite fields via rank function of its generator matrix. Moreover, they presented the expectation and the covariance formulas for the higher weight distributions of linear codes.
\medskip

\noindent In a different direction, Delsarte \cite{delsarte1978} introduced the theory of discrete harmonic functions on finite sets, which was later applied to coding theory by Bachoc \cite{bachoc1999} to present a
harmonic generalization of the MacWilliams identity. This harmonic
generalization plays an important role to provide an alternative
simple proof of the Assmus-Mattson theorem~\cite{assmus1969} for combinatorial $t$-designs associated to linear codes. Recently, Britz, Chakraborty, and Miezaki \cite{britz2024} successfully merged these two domains, higher weights and harmonic functions, by defining the harmonic higher weight enumerator and proving a MacWilliams-type identity for it. However, their derivation mostly depends on matroid theory~(cf.~\cite{oxley1992}) and associated algebraic concepts, including harmonic analogue of Tutte polynomials \cite{chakraborty2023, tutte1954} and Greene-type relation \cite{greene1976}; see also~\cite{BrChMiIsTa2024}.

\medskip
\noindent The first goal of this paper is to provide an alternative, elementary proof of the MacWilliams identity for harmonic higher weight distributions by adapting Simonis'~\cite{simonis1994} combinatorial
method to the harmonic setting. The central combinatorial challenge in this adaptation is forcing the harmonic sums to factor appropriately; we resolve this by using a simple reflection identity of the harmonic function.

\medskip
\noindent The second goal of this paper is to investigate the statistical moments of the harmonic higher weight enumerator for random linear codes, extending the work of \"{O}zen and Tekin \cite{ozen2013}. We compute the expectation and the covariance of the harmonic weight enumerators. We prove that the expectation (first moment) of the harmonic higher 
weight enumerators is zero for all non-trivial harmonic functions. 
This occurs because random codes are, on average, perfectly symmetric, mirroring the properties of the trivial code. While this first moment vanishes, we show that the covariance (second moment) is strictly non-zero, providing a non-trivial statistical invariant that can be used to verify the MacWilliams identity in a probabilistic setting.

\medskip
\noindent The rest of the paper is organized as follows. In Section 2, we lay the groundwork by establishing the notation and definitions for linear codes, puncturing and shortening, and discrete harmonic functions. In Section 3, we prove Simonis type MacWilliams identity (Theorem~\ref{thm:main_theorem})
for harmonic higher weight distributions. We also provide explicit examples that verify the identity. 
In Section 4, we investigate the statistical moments of the harmonic higher weight enumerators for random linear codes. By establishing a relationship between the harmonic higher weight enumerator and harmonic rank function (Theorem~\ref{prop:rank}), we prove the vanishing of the expectation (Theorem~\ref{thm:expectation}) and derive an explicit formula for the covariance (Theorem~\ref{thm:cov}). Finally, in Section 5, we provide explicit examples verifying these probabilistic results.

\section{Preliminaries}

\noindent In this section, we establish the notation and definitions for linear codes, puncturing and shortening, and discrete harmonic functions. We largely follow the notation established in \cite{britz2024}.

\subsection{Linear codes}

\noindent Let $E := \{1, 2, \dots, n\}$ be a finite set. 
Let $\F_q$ be the finite field of order~$q$, where $q$ is a prime power.
The $n$-dimensional vector space over $\F_q$, denoted by $\F_q^n$, 
is equipped with the usual inner product $u \cdot v = \sum_{i=1}^n u_i v_i$ for $u, v \in \F_q^n$. We call the elements of~$\F_{q}^{n}$ as
\emph{vectors}.
For a vector $u \in \F_q^n$, the \textit{support} of $u$ is defined as $\supp(u) := \{i \in E \mid u_i \neq 0\}$, and its \textit{weight} is $\wt(u) := |\supp(u)|$. This notion extends to any subset $D \subseteq \F_q^n$:
\begin{align*} 
	\Supp(D) & := \bigcup_{u \in D} \supp(u),\\ 
	\wt(D) &:= |\Supp(D)|. 
\end{align*}
We refer to $\wt(D)$ as the \textit{effective length} of $D$.

An $\F_q$-\emph{linear code} of length $n$ 
is a linear subspace of $\F_q^n$. An $\F_q$-linear code of length~$n$ is called $[n,k]$ \emph{code} if its dimension is $k$. The \emph{dual code} 
of an $\F_q$-linear code $C$, denoted by $C^\perp$, is defined as  
\[
	C^\perp 
	:= 
	\{ v \in \F_q^n \mid u \cdot v = 0 \text{ for all } u \in C \}.
\] 
It is well-known that if $C$ is an $[n,k]$ code, then $C^\perp$ is an $[n, n-k]$ code.

Let $C$ be an $[n,k]$ code.
Let $r$ be a non-negative integer such that $r \le k$. We define $\mathcal{D}_r(C)$ as the set of all $r$-dimensional subcodes of $C$. The $r$-\emph{th higher weight distribution} of $C$ is the sequence $\{A_i^{(r)}(C) \mid 0 \le i \le n\}$, where:
\[
 	A_i^{(r)}(C) 
 	:= 
 	\#\{D \in \mathcal{D}_r(C) \mid \wt(D) = i\},
\]
and satisfies the following MacWilliams-type identity
given by Simonis~\cite[{Theorem 1}]{simonis1994}:
\[
	\sum_{i=0}^{n} 
	\binom{n-i}{n-j} 
	A_{i}^{(r)}(C^\perp) 
	= 
	\sum_{u=0}^r 
	q^{u(j-k+u-r)} 
	\qbinom{j-k}{r-u}
	\sum_{v=0}^{n} 
	\binom{n-v}{j} 
	A_{v}^{(u)}(C),
\]
for $0\leq i,j\leq n$.
\noindent
Here, for all integers $a,b$ and $b \geq 0$, 
the \textit{Gaussian binomial coefficient} $\qbinom{a}{b}$
is defined as follows: 
\[
	\qbinom{a}{b}
	:=
	\begin{cases}
		\prod\limits_{i=0}^{b-1}\,\dfrac{ q^{a}-q^{i}}{q^{b}-q^{i}}
		& \text{for } b\neq 0,\\
		1 &\text{for } b=0.
	\end{cases}	
\]
For $a,b\geq 0$, $\qbinom{a}{b}$ counts the number of $b$-dimensional subspaces of an $a$-dimensional linear space over $\F_q$. Moreover, $\binom{a}{b}$ denotes the usual binomial coefficient.

Let $X \subseteq E$. Then for a vector $u \in \F_q^n$,  the restriction of $u$ to $X$ is a vector 
\[
	u_X : X \to \F_{q}; \quad i \mapsto u_i
\] 
in $\F_q^{|X|}$.
Also for an $\F_q$-linear code~$C$, 
we define the \textit{punctured code} $C^X$ and the \textit{shortened code} $C(X)$ as follows:
\begin{align*}
C^X &:= \{u_X \in \F_q^{|X|} \mid u \in C\}, \\
C(X) &:= \{u_X \in \F_q^{|X|} \mid u \in C,\, \supp(u) \subseteq X\}.
\end{align*}

The following useful dimension formulas that relates puncturing and shortening of a code are immediately from~\cite[Proposition 1]{simonis1994}.

\begin{proposition}\label{prop:dim_formulas}
Let $C$ be an $[n, k]$ code over $\F_q$. For any subset $X \subseteq E$:
\begin{enumerate}
    \item[(i)] $\dim C(X) = k - \dim C^{E \setminus X}$.
    \item[(ii)] $\dim C^X + \dim C^\perp(X) = |X|$.
\end{enumerate}
\end{proposition}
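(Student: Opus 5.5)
Both parts follow from elementary linear algebra applied to the restriction maps $u \mapsto u_X$ and $u \mapsto u_{E\setminus X}$ on $C$, combined with the description of the dual of a punctured code.

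\emph{Part (i).} Consider the linear map
\[
\pi \colon C \to C^{E\setminus X}, \qquad u \mapsto u_{E\setminus X},
\]
which is surjective by the definition of the punctured code. Its kernel is
\[
\ker \pi = \{u \in C \mid \supp(u)\subseteq X\}.
\]
Restriction to $X$ maps this kernel bijectively onto $C(X)$. The map is surjective by definition of the shortened code. It is injective because a vector with support in $X$ is determined by its coordinates on $X$. Rank--nullity then gives
\[
k = \dim C^{E\setminus X} + \dim C(X),
\]
which is (i).

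\emph{Part (ii).} I will show that, inside $\F_q^{|X|}$ with its standard inner product, $C^\perp(X) = (C^X)^\perp$. Since the ambient space has dimension $|X|$, this yields (ii) at once.

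For the inclusion $C^\perp(X)\subseteq (C^X)^\perp$, take $v\in C^\perp$ with $\supp(v)\subseteq X$. For every $u\in C$,
\[
u_X\cdot v_X = u\cdot v = 0,
\]
so $v_X \in (C^X)^\perp$.

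For the reverse inclusion, take $w\in (C^X)^\perp$ and extend it by zeros outside $X$ to a vector $\tilde w\in\F_q^n$. Then for every $u\in C$,
\[
u\cdot\tilde w = u_X\cdot w = 0,
\]
so $\tilde w\in C^\perp$. Its support lies in $X$ and $\tilde w_X = w$, hence $w\in C^\perp(X)$.

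There is no real obstacle here. The only care needed is to note that the standard inner product on $\F_q^{|X|}$ is nondegenerate, so that $\dim W + \dim W^\perp = |X|$ holds for every subspace $W$ even though $W\cap W^\perp$ may be nonzero.
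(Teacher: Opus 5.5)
Your proof is correct. Part (i) follows from rank--nullity for the restriction map $C \to C^{E\setminus X}$, and part (ii) follows from the identification $C^\perp(X) = (C^X)^\perp$ inside $\F_q^{|X|}$ together with nondegeneracy of the standard form. The paper gives no argument of its own and simply imports the statement from \cite[Proposition 1]{simonis1994}; yours is the standard elementary linear-algebra justification of that cited result.
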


\subsection{Discrete Harmonic Functions}

Let $E_d := \{X \subseteq E \mid |X| = d\}$ for $d = 0, 1, \dots, n$.
The set of all subsets of $E$ is denoted by $2^{E}$.
We denote by 
$\R 2^{E}$ and $\R E_{d}$
the real vector spaces spanned by the elements of  
$2^{E}$ and $E_{d}$,
respectively. 

An element of 
$\R E_{d}$
is denoted by
\begin{equation}\label{Equ:FunREd}
	f :=
	\sum_{Z \in E_{d}}
	f(Z) Z,
\end{equation}
and is identified with the real-valued function on 
$E_{d}$
given by 
$Z \mapsto f(Z)$.  
Such an element $f \in {\R E_d}$ can be extended to a function $\widetilde{f} \in {\R2^E}$ by setting, for all $X \subseteq E$:
\begin{equation}\label{Equ:tildef}
	\widetilde{f}(X) 
	:= 
	\sum_{\substack{Z \in E_d \\ Z \subseteq X}} f(Z). 
\end{equation}
{Note that $\widetilde{f}(\emptyset) = f(\emptyset)$ when $d=0$,
	and that $\widetilde{f}(\emptyset) = 0$ otherwise}. 
If an element 
$g \in \R 2^{E}$
is equal to $\widetilde{f}$  
for some $f \in \R E_{d}$, 
then we say that $g$ has degree~$d$.

\noindent
The \textit{differentiation operator} $\gamma$ on ${\R E_d}$ is defined by linearity from the identity:
\begin{equation}\label{Equ:Gamma}
	\gamma(Z) 
	:= 
	\sum_{\substack{Y \in E_{d-1} \\ Y \subseteq Z}} Y 
\end{equation}
for all $Z \in E_d$. The space of \textit{discrete harmonic functions of degree $d$} is the kernel of $\gamma$:
\[ 
	\Harm_d(n) 
	:= 
	\ker\left(\gamma \big|_{\mathbb{R}{E_d}}\right). 
\]

\begin{remark}[\cite{bachoc1999, delsarte1978}]\label{Rem:Gamma}
	Let $f \in \Harm_{d}(n)$ and $i \in \{0,1,\ldots,d-1\}$.
	Then 
	$\gamma^{d-i}(f) = 0$. 
	This means from definition~(\ref{Equ:Gamma})
	that
	$$\sum_{X \in E_{i}}\left(\sum_{\substack{Z \in E_{d}, X \subseteq Z}} f(Z)\right) X = 0.$$
	This implies that $\sum_{\substack{Z \in E_{d}, X \subseteq Z}} f(Z) = 0$
	for any $X \in E_{i}$.
	In particular, if $X\in E_{d-1}$, then 
	\begin{equation}\label{eq:harm_condition}
		\sum_{e \in E \setminus X} f(X \cup \{e\}) = 0.
	\end{equation}

\end{remark}

\begin{remark}\label{Rem:New}
	Let~$f \in \Harm_{d}(n)$. 
	Since $\sum_{Z \in E_{d}} f(Z) = 0$, 
	it is easy to check from~(\ref{Equ:Gamma}) that

	$\sum_{X \in E_{t}} \widetilde{f}(X) = 0$, where
	$1 \leq d \leq t \leq n$.
\end{remark}

\begin{remark}\label{Rem:BachocLem}
	From the definition of $\widetilde{f}$ 
	for $f \in \Harm_{d}(n)$,
	we have $\widetilde{f}(X) = 0$
	for each $X \subseteq {E}$ such that $|X| < d$. 
	
\end{remark}

The following useful identity of discrete harmonic functions 
is immediate from~\cite[Lemma~2.1]{bachoc1999}. We prefer to 
call this identity as \emph{reflection identity}. 
We prefer to call this identity as \emph{reflection identity}.
For the proof, we refer the reader to~\cite[Remark 3.4]{britz2024}.

\begin{lemma}[Reflection Identity]\label{prop:reflection}
	For $f \in \Harm_d(n)$ and any subset $X \subseteq E$:
	\[ \widetilde{f}(E \setminus X) = (-1)^d \widetilde{f}(X). \]
\end{lemma}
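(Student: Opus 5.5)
The plan is to expand $\widetilde{f}(E\setminus X)$ by inclusion--exclusion in the sizes of intersections with $X$, and then use the vanishing results of Remark~\ref{Rem:Gamma} to show that only one term survives.

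Set $Y := E\setminus X$. By definition~(\ref{Equ:tildef}),
\[
\widetilde{f}(Y)=\sum_{\substack{Z\in E_d\\ Z\cap X=\emptyset}} f(Z).
\]
I will rewrite the disjointness condition with the indicator identity $[Z\cap X=\emptyset]=\sum_{W\subseteq Z\cap X}(-1)^{|W|}$. Interchanging the order of summation gives
\[
\widetilde{f}(Y)=\sum_{\substack{W\subseteq X\\ |W|\le d}}(-1)^{|W|}\sum_{\substack{Z\in E_d\\ W\subseteq Z}} f(Z).
\]

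Next I group the terms according to $i=|W|$.

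\textbf{Terms with $i<d$.} Here $W\in E_i$ with $0\le i\le d-1$. By Remark~\ref{Rem:Gamma}, the inner sum $\sum_{Z\in E_d,\,W\subseteq Z} f(Z)$ vanishes for every such $W$. This includes $i=0$, since the total sum of $f$ over $E_d$ is zero when $d\ge1$.

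\textbf{Terms with $i=d$.} Now $W\subseteq X$ with $|W|=d$, and the inner sum reduces to the single value $f(W)$. These terms therefore contribute
\[
(-1)^d\sum_{\substack{W\in E_d\\ W\subseteq X}} f(W)=(-1)^d\,\widetilde{f}(X).
\]

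Adding the two groups gives $\widetilde{f}(E\setminus X)=(-1)^d\widetilde{f}(X)$, which is the claim.

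Two points need separate attention.

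\textbf{The case $d=0$.} Here $f$ is a constant on $E_0=\{\emptyset\}$, and $\widetilde{f}(X)=f(\emptyset)$ for every $X$, so the identity holds trivially. The case $d\ge1$ is exactly where Remark~\ref{Rem:Gamma} applies.

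\textbf{Justifying Remark~\ref{Rem:Gamma}.} The argument depends on the vanishing of $\sum_{Z\supseteq W} f(Z)$ for every $|W|<d$. This needs a short check, because the remark derives it from $\gamma^{d-i}(f)=0$. The coefficient of $W$ in $\gamma^{d-i}(Z)$ equals $(d-i)!$ whenever $W\subseteq Z$, since that counts the chains from $Z$ down to $W$. The sum is therefore $(d-i)!^{-1}$ times a coefficient of the zero vector, and so it is zero.

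With this verified, the rest is routine bookkeeping.
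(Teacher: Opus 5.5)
Your proof is correct and complete. The paper gives no proof of this lemma: it calls the identity immediate from Bachoc's Lemma~2.1 and refers to Remark~3.4 of Britz--Chakraborty--Miezaki. Your inclusion--exclusion computation therefore supplies a self-contained proof. It uses nothing beyond Remark~\ref{Rem:Gamma}, namely the vanishing of $\sum_{Z\in E_d,\,Z\supseteq W}f(Z)$ for $|W|<d$.

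Your chain count is also a small but genuine correction to that remark as written. In fact $\gamma^{d-i}(f)=(d-i)!\sum_{W\in E_i}\bigl(\sum_{Z\supseteq W}f(Z)\bigr)W$, and the paper silently drops the factor $(d-i)!$. This is harmless over $\R$, but you are right to make it explicit.

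Two minor remarks:
\begin{itemize}
\item The separate treatment of $d=0$ is unnecessary. In that case the group $i<d$ is empty and the $i=d$ term alone already gives $\widetilde{f}(X)$.
\item The same computation proves a stronger, graded identity. Replace $[Z\cap X=\emptyset]$ by the indicator $[\,|Z\cap X|=i\,]=\sum_{W\subseteq Z\cap X}(-1)^{|W|-i}\binom{|W|}{i}$. This yields $\sum_{Z\in E_d,\ |Z\cap X|=i}f(Z)=(-1)^{d-i}\binom{d}{i}\widetilde{f}(X)$ for $0\le i\le d$, and the reflection identity is its case $i=0$.
\end{itemize}
The graded version is what Bachoc-style harmonic MacWilliams computations require, and it is presumably what the citation has in mind. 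So rather than taking a different route, you have in effect reconstructed the argument behind the reference and made the paper independent of it.
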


\begin{remark}\label{Rem:nminusd}
	From the above equality it is clear that if $|X| > n-d$, 
	then $\widetilde{f}(X) = 0$.
\end{remark}

\begin{example}\label{ex:harm_func}
Let $E = \{1, 2, 3, 4\}$ and $d = 2$. Define $f \in {\R E_2}$ by:
\[
	\begin{array}{ccc}
		f(\{1,2\}) = a_1,&
		f(\{1,3\}) = a_2,& 
		f(\{1,4\}) = a_3,\\
		f(\{2,3\}) = a_4, &
		f(\{2,4\}) = a_5, &
		f(\{3,4\}) = a_6.
	\end{array}	
\]
If $X = \{2,3,4\}$,
then $\widetilde{f}(X) = a_4+a_5+a_6$.
Applying differential operator $\gamma$ on $f$, it follows that
$f \in \Harm_2(4)$ if and only if 
\[
	\sum_{e\in E\setminus X}
	f(X\cup \{e\})
	= 0
\]
for all $X \in E_1$.

This implies
$a_{3} = a_{4} = -(a_{1}+a_{2})$, $a_{5} = a_{2}$ and $a_{6} = a_{1}$.

\end{example}

\section{Harmonic analogue of Simonis' MacWilliams identity}

In this section, we provide a combinatorial proof of the MacWilliams identity for higher weight distribution
associated to harmonic function by adapting the counting method that was originally introduced by Simonis \cite{simonis1994} for the classical higher weights. The central harmonic ingredient required for this adaptation is the reflection identity.

Let $C$ be an $[n, k]$ code over $\F_q$, and let $f \in \Harm_d(n)$ with $d \neq 0$.

\begin{definition}\label{def:harm_weight_enum}
	The \emph{harmonic $r$-th higher weight enumerator} of $C$ associated to $f$ is defined as:
	\[ W_{C,f}^{(r)}(z) := \sum_{i=0}^{n} A_{i,f}^{(r)}(C) z^{n-i}, \]
	where the harmonic $r$-th higher weight distribution is given by:
	\[ A_{i,f}^{(r)}(C) := \sum_{\substack{D \in \mathcal{D}_r(C) \\ \wt(D) = i}} \widetilde{f}(\Supp(D)). \]
\end{definition}

Then it is immediate from Remarks~\ref{Rem:BachocLem} and~\ref{Rem:nminusd}
that
the harmonic $r$-th higher weight enumerator
$W_{C,f}^{(r)}(z)$ of~$C$ 
associated to $f\in \Harm_{d}(n)$ is always divisible by $z^d$. We therefore write the polynomial as:
\[
	W_{C,f}^{(r)}(z) = z^d Z_{C,f}^{(r)}(z),
\]
where $Z_{C,f}^{(r)}(z)$ is a polynomial of degree~$n-2d$
of the form:

\[ 
Z_{C,f}^{(r)}(z) 
:= 
\sum_{i=d}^{n-d} 
A_{i,f}^{(r)}(C) 
z^{n-i-d}. 
\]

\begin{remark}
	Clearly,
	$A_{0,f}^{(r)}(C) = 0$ for all $0 \leq r \leq k$.
\end{remark}

\begin{remark}

	Since every $1$-dimensional subspace of $C$ contains $q-1$ nonzero codewords, 
	$(q - 1)A_{i,f}^{(1)}(C) = A_{i,f}(C)$ for $0 \leq i \leq n$,
	where 
	\[
	A_{i,f}(C)
	:= 
	\sum_{{u} \in C, \wt({u}) = i} 
	\widetilde{f}(\supp({u}))
	\]
	is the harmonic weight distribution of~$C$.

\end{remark}

\begin{remark}
	If $\deg f = 0$, then
	{$W_{C,f}^{(r)}(z) = f(\emptyset)\, W_{C}^{(r)}(z)$},
	where $W_{C}^{(r)}(z)$ denotes
	the $r$-\emph{th higher weight enumerator} of $C$:
	\[ W_C^{(r)}(z) := \sum_{i=0}^n A_i^{(r)}(C) z^{n-i}. \]

\end{remark}

\begin{remark}
	Because $\widetilde{f}(E) = 0$ for any $f \in \Harm_d(n)$ with $d \ge 1$, we have $A_{n,f}^{(r)}(C) = 0$. 
	
\end{remark}

Let $C$ be an $[n,k]$ code, and $f\in \Harm_{d}(n)$. For integers $j, m \ge 0$, we define the \textit{harmonic $N$-distribution} of~$C$ as:
\[ N_{j,f}^m(C) := \sum_{\substack{X \subseteq E,\, |X| = j \\ \dim C(X) = m}} \widetilde{f}(X). \]

Now we state the following useful lemmas. The third one is immediately
from~\cite[Lemma 3]{simonis1994}.

\begin{lemma}\label{lem:harm_lemma1}
For $f \in \mathrm{Harm}_d(n)$ and integers $j$ ($d\leq j\leq n-d$) 
and $r$ ($0\leq r\leq k$):
\[
  \sum_{m=0}^{k} \gbin{m}{r}_{\!q} N^m_{j,f}(C)
  = \sum_{i=d}^{n-d} \binom{n-i-d}{n-j-d} A^{(r)}_{i,f}(C).
\]
\end{lemma}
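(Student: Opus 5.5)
The plan is a weighted double count, as in Simonis' proof of the classical identity. Fix $j$ and $r$, and count pairs $(X,D)$ with $X\subseteq E$, $|X|=j$, and $D\in\mathcal{D}_r(C)$ satisfying $\Supp(D)\subseteq X$. Each pair is weighted by $\widetilde{f}(X)$. I evaluate the weighted sum $\sum_{(X,D)}\widetilde{f}(X)$ in two ways.

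\emph{Summing over $X$ first.} For $u\in C$ with $\supp(u)\subseteq X$, the restriction map $u\mapsto u_X$ is injective. So the $r$-dimensional subcodes $D$ of $C$ with $\Supp(D)\subseteq X$ are in bijection with the $r$-dimensional subspaces of the shortened code $C(X)$. If $\dim C(X)=m$, there are $\qbinom{m}{r}$ of them. Grouping the sets $X$ by $m=\dim C(X)\in\{0,\dots,k\}$ gives exactly $\sum_{m}\qbinom{m}{r}N^m_{j,f}(C)$, the left-hand side.

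\emph{Summing over $D$ first.} Fix $D$ with $S=\Supp(D)$ and $|S|=i$. We must compute
\[
\sum_{\substack{X\supseteq S\\ |X|=j}}\widetilde{f}(X).
\]
This is the key step. Expanding $\widetilde{f}$ directly is awkward, because $f(Z)$ is counted according to $|S\cup Z|$. Instead I use the Reflection Identity (Lemma~\ref{prop:reflection}): $\widetilde{f}(X)=(-1)^d\widetilde{f}(E\setminus X)$. Writing $Y=E\setminus X$, the sets $Y$ range over the subsets of $E\setminus S$ of size $n-j$. Then
\[
\sum_{\substack{Y\subseteq E\setminus S\\ |Y|=n-j}}\widetilde{f}(Y)=\sum_{\substack{Z\subseteq E\setminus S\\ |Z|=d}} f(Z)\binom{n-i-d}{n-j-d}=\binom{n-i-d}{n-j-d}\widetilde{f}(E\setminus S).
\]
The first equality holds because each $d$-set $Z\subseteq E\setminus S$ lies in exactly $\binom{n-i-d}{n-j-d}$ such $Y$. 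Applying the reflection identity once more, $\widetilde{f}(E\setminus S)=(-1)^d\widetilde{f}(S)$. The two signs cancel, so the inner sum equals $\binom{n-i-d}{n-j-d}\widetilde{f}(S)$.

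Summing over $D$ and grouping by $i=\wt(D)$ gives $\sum_i\binom{n-i-d}{n-j-d}A^{(r)}_{i,f}(C)$. By Remarks~\ref{Rem:BachocLem} and~\ref{Rem:nminusd}, $\widetilde{f}(S)=0$ when $|S|<d$ or $|S|>n-d$. Hence the sum may be restricted to $d\le i\le n-d$, which gives the right-hand side.

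The only care needed is in the binomial conventions at the boundary. For $i>j$ no $X$ exists, and $\binom{n-i-d}{n-j-d}=0$ there since $n-i-d<n-j-d$ and $n-i-d\ge 0$. The hypothesis $d\le j\le n-d$ ensures that $n-j-d\ge0$, so the coefficient is an ordinary binomial coefficient.
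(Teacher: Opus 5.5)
Your proof is correct and is essentially the paper's argument. The paper double counts pairs $(D,Y)$ with $\Supp(D)\cap Y=\emptyset$ and $|Y|=n-j$, weighted by $\widetilde{f}(Y)$, which is your count under $Y=E\setminus X$ with the weight changed by the reflection factor $(-1)^d$. The only difference is where the sign is handled (the paper picks up $(-1)^d$ in both counts and divides it out, while you apply reflection twice in the count over $D$), and your explicit treatment of subcodes with $\wt(D)\notin[d,n-d]$ is, if anything, slightly cleaner than the paper's restriction to $\mathcal{A}^r$.
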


\begin{proof}
Let $\mathcal{A}^r = \bigcup_{i=d}^{n-d} \mathcal{D}_r(C)$ be the set of all
$r$-dimensional subcodes of $C$ with effective length between $d$ and $n-d$.

Let
\[
  \Sigma := \sum_{\substack{(D,Y)\colon D \in \mathcal{A}^r,\;
    Y \in \binom{E}{n-j} \\ \mathrm{Supp}(D) \cap Y = \emptyset}}
  \widetilde{f}(Y)
\]
Now evaluate the sum in two ways.

\noindent\textit{Count by $Y$.}
For each fixed $Y \in \binom{E}{n-j}$, the subcodes $D$ with
$\mathrm{Supp}(D) \cap Y = \emptyset$ are exactly the $r$-dimensional subcodes
of $C(E \setminus Y)$, of which there are $\bigl[\frac{\dim C(E \setminus Y)}{r}\bigr]_q$.
Setting $X = E \setminus Y$ and applying Proposition~\ref{prop:reflection}:
\begin{equation}\label{Equ:Sigma1}
  \Sigma
  = \sum_{X \in \binom{E}{j}} \widetilde{f}(E \setminus X)
    \gbin{\dim C(X)}{r}_{\!q}
  = (-1)^d \sum_{m=0}^{k} \gbin{m}{r}_{\!q} N^m_{j,f}(C).
\end{equation}

\noindent\textit{Count by $D$.}
Fix $D \in \mathcal{A}^r_i$ with $\mathrm{Supp}(D) = I$, $|I| = i$.
The valid $Y$'s satisfy $Y \subseteq E \setminus I$, $|Y| = n-j$:
\[
  \sum_{\substack{Y \in \binom{E}{n-j} \\ Y \subseteq E \setminus I}}
  \widetilde{f}(Y)
  = \binom{n-i-d}{n-j-d} \widetilde{f}(E \setminus I)
  = (-1)^d \binom{n-i-d}{n-j-d} \widetilde{f}(I),
\]
where the first step counts, for each $Z \in \binom{E \setminus I}{d}$,
the $\binom{n-i-d}{n-j-d}$ sets $Y \supseteq Z$ of size $n-j$ in $E \setminus I$,
and the second step applies Proposition~\ref{prop:reflection}.
Summing over all $D$:
\begin{equation}\label{Equ:Sigma2}
  \Sigma = (-1)^d \sum_{i=d}^{n-d} \binom{n-i-d}{n-j-d} A^{(r)}_{i,f}(C).
\end{equation}

\noindent Equating (\ref{Equ:Sigma1}) and (\ref{Equ:Sigma2}) and dividing by $(-1)^d$ gives the result.
\end{proof}

\begin{lemma}\label{lem:harm_lemma2}
For $f \in \Harm_d(n)$ and all integers $j$ and $m$:
\[ N_{j,f}^m(C^\perp) = (-1)^d N_{n-j, f}^{m+k-j}(C). \]
\end{lemma}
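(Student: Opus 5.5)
The plan is to reindex the defining sum for $N_{j,f}^m(C^\perp)$ by complements and use the reflection identity (Lemma~\ref{prop:reflection}) together with the dimension formulas of Proposition~\ref{prop:dim_formulas}. By definition,
\[
	N_{j,f}^m(C^\perp) = \sum_{\substack{X \subseteq E,\ |X| = j \\ \dim C^\perp(X) = m}} \widetilde{f}(X).
\]
The first step is to translate the condition $\dim C^\perp(X) = m$ into a condition on $C$ alone. Proposition~\ref{prop:dim_formulas}(ii) gives $\dim C^\perp(X) = |X| - \dim C^X = j - \dim C^X$. Proposition~\ref{prop:dim_formulas}(i), applied to the complement $E\setminus X$ (so that $E\setminus(E\setminus X) = X$), gives $\dim C(E\setminus X) = k - \dim C^X$. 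Combining the two,
\[
	\dim C(E \setminus X) = k - j + \dim C^\perp(X).
\]
So $\dim C^\perp(X) = m$ holds exactly when $\dim C(E\setminus X) = m + k - j$.

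Next I substitute $Y := E \setminus X$. As $X$ ranges over the $j$-subsets of $E$, $Y$ ranges bijectively over the $(n-j)$-subsets. The sum becomes a sum over $Y$ with $|Y| = n-j$ and $\dim C(Y) = m+k-j$, with summand $\widetilde{f}(E\setminus Y)$. The reflection identity gives $\widetilde{f}(E\setminus Y) = (-1)^d \widetilde{f}(Y)$. Pulling out the sign, the remaining sum is by definition $N_{n-j,f}^{m+k-j}(C)$, which is the claimed identity.

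The statement is asserted for all integers $j, m$, so the only point needing care is the degenerate range. If $j<0$ or $j>n$, both index sets are empty. If $m$ is outside the attainable range of dimensions, the translated condition $m+k-j$ is likewise unattainable, so both sides vanish consistently and the bijection argument covers these cases without change. The main, though mild, obstacle is getting the dimension bookkeeping right: applying Proposition~\ref{prop:dim_formulas}(i) to the complement rather than to $X$ itself. Once that is done, everything else is a direct reindexing.
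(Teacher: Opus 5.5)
Your proposal is correct and follows essentially the same route as the paper: translate $\dim C^\perp(X)=m$ into $\dim C(E\setminus X)=m+k-j$ via Proposition~\ref{prop:dim_formulas}, reindex the sum by complements, and apply the reflection identity (Lemma~\ref{prop:reflection}). Because you derive the dimension relation as a pointwise identity valid for every $X$, the bijection between the two index families is immediate, which covers the paper's separate verification that the complement map has an inverse.
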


\begin{proof}
The statement obviously holds for $j \notin \{0,1,\ldots,n\}$.
Now let $X \subseteq E$ be a subset such that $|X| = j$ and $\dim C^\perp(X) = m$. Define $Y = E \setminus X$, so $|Y| = n - j$. By Proposition \ref{prop:dim_formulas}(ii), we have 
\[ \dim C^X = |X| - \dim C^\perp(X) = j - m. \]
By Proposition \ref{prop:dim_formulas}(i), we have 
\[ \dim C(Y) = \dim C(E \setminus X) = k - \dim C^X = k - (j - m) = m + k - j. \]
This shows that the complement map $\phi(X) = E \setminus X$ sends subsets of size $j$ with $\dim C^\perp(X) = m$ to subsets of size $n-j$ with $\dim C(Y) = m+k-j$.

To verify that $\phi$ is a bijection, we must show that every valid $Y$ maps back to a valid $X$. Let $Y \subseteq E$ be any subset such that $|Y| = n-j$ and $\dim C(Y) = m+k-j$. Let $X = E \setminus Y$. Then $|X| = n - |Y| = j$. Furthermore, by Proposition \ref{prop:dim_formulas}(i), $\dim C(Y) = k - \dim C^X$, which implies $\dim C^X = k - (m+k-j) = j-m$. Applying Proposition \ref{prop:dim_formulas}(ii), we obtain 
\[ \dim C^\perp(X) = |X| - \dim C^X = j - (j-m) = m. \]
Thus, $X$ satisfies the exact dimension conditions for the dual code. Since taking the complement twice returns the original set (i.e., $E \setminus (E \setminus X) = X$), the map $\phi$ is a bijection between these two families of subsets.
    
Applying this bijection and the reflection identity (Lemma~\ref{prop:reflection}), we compute the dual distribution:
\begin{align*}
    N_{j,f}^m(C^\perp) &= \sum_{\substack{X \subseteq E,\, |X|=j \\ \dim C^\perp(X)=m}} \widetilde{f}(X) \\
    &= \sum_{\substack{Y \subseteq E \\ |Y|=n-j \\ \dim C(Y)=m+k-j}} \widetilde{f}(E \setminus Y) \\
    &= (-1)^d \sum_{\substack{Y \subseteq E \\ |Y|=n-j \\ \dim C(Y)=m+k-j}} \widetilde{f}(Y) \\
    &= (-1)^d N_{n-j, f}^{m+k-j}(C).
\end{align*}
This completes the proof.
\end{proof}

\begin{lemma}
\label{lem:q_vandermonde}
For all integers $a, b$ and $r \ge 0$, we have
\[ 
	\qbinom{a+b}{r} 
	= \sum_{u=0}^r 
	q^{u(a+u-r)} 
	\qbinom{a}{r-u} 
	\qbinom{b}{u}. 
\]
\end{lemma}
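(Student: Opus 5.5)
We prove the $q$-Vandermonde identity by counting subspaces, first for $a,b\ge 0$ and then extending to all integers by a polynomial-identity argument.

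\emph{Counting argument.} Let $a,b\ge 0$ and $V=\F_q^{a+b}=A\oplus B$ with $\dim A=a$, $\dim B=b$. Let $\pi\colon V\to B$ be the projection with kernel $A$. We count the $r$-dimensional subspaces $W\subseteq V$ according to $u:=\dim \pi(W)$. Then $\dim(W\cap A)=r-u$, so $W$ determines the pair $(U,W_0)=(\pi(W),\,W\cap A)$, with $U\subseteq B$ of dimension $u$ and $W_0\subseteq A$ of dimension $r-u$. There are $\qbinom{b}{u}\qbinom{a}{r-u}$ such pairs.

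For a fixed pair, the subspaces $W$ with $\pi(W)=U$ and $W\cap A=W_0$ correspond bijectively to linear maps $\varphi\colon U\to A/W_0$. Explicitly, $W=\{\,x+y : y\in U,\ x\in A,\ x+W_0=\varphi(y)\,\}$. There are $q^{u(a-(r-u))}=q^{u(a+u-r)}$ such maps. Summing over $u$ gives the identity for nonnegative $a,b$.

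The step needing care is checking this bijection: that each such $W$ is a subspace with the prescribed projection and intersection, and that $\varphi$ is recovered uniquely from $W$. Both follow directly from the decomposition $V=A\oplus B$.

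\emph{Extension to all integers.} We use the paper's definition of $\qbinom{a}{b}$ as a product. For fixed $b$, $\qbinom{a}{b}$ is a rational function of $x=q^{a}$ with denominator depending only on $b$, namely a polynomial in $x$ divided by $\prod_{i=0}^{b-1}(q^b-q^i)$.

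Fix $b\ge 0$ and $r$. Both sides, multiplied by $q^{-ua}$-free normalizations, are polynomials in $x=q^a$. Note that $q^{u(a+u-r)}=x^u q^{u(u-r)}$ is polynomial in $x$. The two sides agree for infinitely many values $x=q^a$, $a\ge 0$, so they agree identically; hence the identity holds for all integers $a$ with $b\ge0$.

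Now fix an arbitrary integer $a$ and view both sides as polynomials in $y=q^b$. Here $\qbinom{a+b}{r-u}$ is polynomial in $q^aq^b$, while $q^{u(a+u-r)}$ and $\qbinom{a}{r-u}$ do not depend on $b$. Equality for all $b\ge0$ therefore gives equality for all integers $b$.

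The main obstacle is only this bookkeeping: confirming that every factor is genuinely polynomial in the relevant variable, and that the convention $\qbinom{a}{0}=1$ (and vanishing terms for negative arguments) is respected by the product formula. Alternatively, one can cite Simonis' Lemma 3 directly, as the paper indicates.
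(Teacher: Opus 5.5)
Your proof is correct. It takes a genuinely different route from the paper, which gives no argument for this lemma and simply imports it from Simonis~\cite[Lemma 3]{simonis1994}. You instead make it self-contained in two parts.

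\begin{itemize}
\item For $a,b\ge 0$ you count subspaces. The fibre of $W\mapsto(\pi(W),W\cap A)$ over a pair $(U,W_0)$ is in bijection with $\mathrm{Hom}(U,A/W_0)$, which has $q^{u(a-r+u)}$ elements.
\item You then extend to all integers in two stages: first as a polynomial identity in $x=q^a$ with $b\ge 0$ fixed, then in $y=q^b$ with $a$ arbitrary.
\end{itemize}

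The extension step is not cosmetic. In the proof of Theorem~\ref{thm:main_theorem} the lemma is applied with $a=j-k$, which is negative whenever $j<k$. Under the paper's product definition, $\qbinom{j-k}{r-u}$ is then generally nonzero; for example $\qbinom{-1}{1}=-q^{-1}$. So the subspace-counting interpretation alone does not cover the case actually used, and your argument shows why the identity still holds there. The citation route is shorter but leaves this point to the reader.

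A few slips to fix:
\begin{itemize}
\item In the second extension step the left-hand side is $\qbinom{a+b}{r}$, not $\qbinom{a+b}{r-u}$. On the right, the dependence on $y$ enters only through $\qbinom{b}{u}$, which is a polynomial in $y$.
\item The sentence about ``$q^{-ua}$-free normalizations'' should just say that, after clearing the constant denominators $\prod_i (q^{r}-q^{i})$, both sides are polynomials in $x$ with rational coefficients.
\item Your closing remark about ``vanishing terms for negative arguments'' is inaccurate. The product formula does not vanish for a negative upper argument, and no negative lower argument occurs, since $0\le u\le r$.
\end{itemize}

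None of these affects the validity of the proof.
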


We now assemble the lemmas to prove the MacWilliams identity
for harmonic $r$-th higher weight distribution.

\begin{theorem}[Harmonic MacWilliams Identity]\label{thm:main_theorem}
	Let $C$ be an $[n,k]$ code. Let $f \in \Harm_d(n)$ and 
	$0\leq r\leq k$.
	Then for all integers $j$ ($d\leq j\leq n-d$):
\[
\sum_{i=d}^{n-d} 
\binom{n-i-d}{n-j-d} 
A_{i,f}^{(r)}(C^\perp) 
= \sum_{u=0}^r 
(-1)^d q^{u(j-k+u-r)} \qbinom{j-k}{r-u} 
\sum_{v=d}^{n-d} 
\binom{n-v-d}{j-d} 
A_{v,f}^{(u)}(C).
\]

\end{theorem}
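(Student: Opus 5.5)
We follow Simonis' route: start from the left-hand side, convert it to $N$-distributions of $C^\perp$, pass to $C$ by duality, split the Gaussian binomial with the $q$-Vandermonde identity, and convert back to harmonic higher weights.

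First apply Lemma~\ref{lem:harm_lemma1} to the code $C^\perp$, an $[n,n-k]$ code, with the same $j$ and $r$. This gives
\[
\sum_{i=d}^{n-d}\binom{n-i-d}{n-j-d}A^{(r)}_{i,f}(C^\perp)=\sum_{m}\qbinom{m}{r}N^{m}_{j,f}(C^\perp).
\]
Next use Lemma~\ref{lem:harm_lemma2} to replace each $N^{m}_{j,f}(C^\perp)$ by $(-1)^dN^{m+k-j}_{n-j,f}(C)$. Reindexing with $m'=m+k-j$, the right-hand side becomes
\[
(-1)^d\sum_{m'}\qbinom{m'+j-k}{r}N^{m'}_{n-j,f}(C).
\]
The range of $m'$ is harmless: the Gaussian binomial vanishes, or the $N$-term is zero, outside the range $0\le m'\le k$.

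Now expand $\qbinom{(j-k)+m'}{r}$ by Lemma~\ref{lem:q_vandermonde} with $a=j-k$ and $b=m'$:
\[
\qbinom{j-k+m'}{r}=\sum_{u=0}^r q^{u(j-k+u-r)}\qbinom{j-k}{r-u}\qbinom{m'}{u}.
\]
Interchanging the finite sums yields
\[
(-1)^d\sum_{u=0}^r q^{u(j-k+u-r)}\qbinom{j-k}{r-u}\sum_{m'=0}^k\qbinom{m'}{u}N^{m'}_{n-j,f}(C).
\]
Apply Lemma~\ref{lem:harm_lemma1} once more, this time to $C$ with $j$ replaced by $n-j$ and $r$ replaced by $u$. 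The hypothesis $d\le j\le n-d$ guarantees $d\le n-j\le n-d$, so the lemma applies. The inner sum becomes
\[
\sum_{v=d}^{n-d}\binom{n-v-d}{n-(n-j)-d}A^{(u)}_{v,f}(C)=\sum_{v=d}^{n-d}\binom{n-v-d}{j-d}A^{(u)}_{v,f}(C),
\]
which is exactly the stated right-hand side.

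The main points needing care are bookkeeping rather than new ideas. The first is that Lemma~\ref{lem:harm_lemma1} is used with $u$ ranging over $0\le u\le r\le k$; this is within the lemma's hypotheses for $C$. For $C^\perp$ we need $r\le n-k$; if $r>n-k$, both sides should still be checked to vanish consistently (the left side is empty, and the $N$-sums are unaffected). The second is that Lemma~\ref{lem:q_vandermonde} is valid for possibly negative $a=j-k$, which the lemma grants for all integers $a,b$.
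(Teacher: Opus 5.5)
Your proposal is correct and follows the paper's proof step for step: Lemma~\ref{lem:harm_lemma1} applied to $C^\perp$, then Lemma~\ref{lem:harm_lemma2} with the shift $m'=m+k-j$, then the $q$-Vandermonde expansion with $a=j-k$, $b=m'$, an interchange of sums, and finally Lemma~\ref{lem:harm_lemma1} for $C$ with $(j,r)$ replaced by $(n-j,u)$. Your bookkeeping is slightly more careful than the paper's write-up but does not change the argument: you note that the $m$-range for $C^\perp$ is really $0\le m\le n-k$ (the paper writes $\sum_{m=0}^k$), that the case $r>n-k$ reduces to $0=0$ in Lemma~\ref{lem:harm_lemma1}, and that the $q$-Vandermonde identity holds for negative $a$.
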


\begin{proof}
We begin by applying Lemma \ref{lem:harm_lemma1} to the dual code $C^\perp$:
\begin{equation}\label{Equ:MacLem} 
	\sum_{i=d}^{n-d} 
	\binom{n-i-d}{n-j-d} 
	A_{i,f}^{(r)}(C^\perp) 
	= 
	\sum_{m=0}^k \qbinom{m}{r} N_{j,f}^m(C^\perp). 
\end{equation}
Applying Lemma \ref{lem:harm_lemma2} to the right-hand side 
and substituting $s = m + k - j$ (so $m = s - k + j$):

\begin{align*} 
	\sum_{m=0}^k \qbinom{m}{r} N_{j,f}^m(C^\perp) 
	& = 
	(-1)^d \sum_{m=0}^k \qbinom{m}{r} N_{n-j, f}^{m+k-j}(C)\\
	& =
	(-1)^d \sum_{s=0}^k \qbinom{s+j-k}{r} N_{n-j, f}^s(C). 
\end{align*}
By Lemma \ref{lem:q_vandermonde} with $a = j-k$ and $b = s$:
\[ 
	\qbinom{s+j-k}{r} 
	= 
	\sum_{u=0}^r q^{u(j-k+u-r)} 
	\qbinom{j-k}{r-u} 
	\qbinom{s}{u}. 
\]
Substituting this expansion explicitly into our sum yields:
\begin{align*}
	\sum_{m=0}^k \qbinom{m}{r} N_{j,f}^m(C^\perp) 
	& = 
    (-1)^d \sum_{s=0}^k \qbinom{s+j-k}{r} N_{n-j, f}^s(C) \\
    &= (-1)^d \sum_{s=0}^k \left( \sum_{u=0}^r q^{u(j-k+u-r)} \qbinom{j-k}{r-u} \qbinom{s}{u} \right) N_{n-j, f}^s(C) \\
    &= (-1)^d \sum_{u=0}^r q^{u(j-k+u-r)} \qbinom{j-k}{r-u} \sum_{s=0}^k \qbinom{s}{u} N_{n-j, f}^s(C).
\end{align*}
Finally, we apply Lemma \ref{lem:harm_lemma1} with $j$ replaced by $n-j$ and $r$ replaced by $u$. Note that $n - (n-j) - d = j - d$.
Then 

\[ \sum_{s=0}^k \qbinom{s}{u} N_{n-j, f}^s(C) = \sum_{v=d}^{n-d} \binom{n-v-d}{j-d} A_{v,f}^{(u)}(C). \]
Substituting this explicitly into the previous equation
and from Eq.~(\ref{Equ:MacLem}) we conclude:

\[ \sum_{i=d}^{n-d} \binom{n-i-d}{n-j-d} A_{i,f}^{(r)}(C^\perp) = (-1)^d \sum_{u=0}^r q^{u(j-k+u-r)} \qbinom{j-k}{r-u} \sum_{v=d}^{n-d} \binom{n-v-d}{j-d} A_{v,f}^{(u)}(C). \qedhere\]

\end{proof}

The following remarks concerning harmonic generalization of the
MacWilliams identity for $r$-th higher weight distributions stated in Theorem~\ref{thm:main_theorem} might be helpful:

\begin{remark}\label{rem:simonis_remark5}
	Let $\mathbf{A}_f^{(u)}(C) = (A_{v,f}^{(u)}(C))_{v \in \mathcal{I}_d}^T$, where
	$\mathcal{I}_d = \{d, d+1, \dots, n-d\}$.
    The left-hand side of Theorem \ref{thm:main_theorem} is a linear system in the dual harmonic distribution ${\mathbf{A}}_f^{(r)}(C^\perp)$ with co-efficient matrix $\mathbf{B}_{d} = \left( \binom{n-i-d}{n-j-d} \right)_{i,j \in \mathcal{I}_d}$. 
 
    It is immediate that $\mathbf{B}_{d}$ is equivalent to a lower-triangular matrix of size $(n-2d+1) \times (n-2d+1)$ with ones on the diagonal. Thus, $\det(\mathbf{B}_{d}) = 1 \neq 0$. 
    Therefore, we can solve explicitly for the dual harmonic distributions:
    \[
    	{\mathbf{A}}_f^{(r)}(C^\perp) 
    	= 
    	\sum_{u=0}^r 
    	\mathbf{M}_{u,d}^r \, \mathbf{A}_f^{(u)}(C),
    \]
    where the entries of the $(n-2d+1) \times (n-2d+1)$ transformation matrix $\mathbf{M}_{u,d}^r$ are explicitly given by the formula:
    \begin{equation}\label{Equ:Mud}
    	\left(\mathbf{M}_{u,d}^r\right)_{i,v} 
    	:= 
    	(-1)^d 
    	\sum_{j=d}^{n-d} 
    	(-1)^{i+j} 
    	\binom{n-j-d}{n-i-d} 
    	q^{u(j-k+u-r)} 
    	\qbinom{j-k}{r-u} 
    	\binom{n-v-d}{j-d},
    \end{equation}
    for $i, v \in \mathcal{I}_d$. 

\end{remark}

\begin{remark}
    If $u=r$ in Eq.~(\ref{Equ:Mud}) the Gaussian binomial coefficient $\qbinom{j-k}{r-u} = \qbinom{j-k}{0} = 1$. 
    Moreover, by~\cite[Theorem 15(ii)]{macwilliams1977},
    the matrix $\mathbf{M}_{r,d}^r$ simplifies as follows:
    \begin{align*}
        \left(\mathbf{M}_{r,d}^r\right)_{i,v}
        & =
        (-1)^{d}
        \sum_{j=d}^{n-d} 
        (-1)^{i+j} 
        \binom{n-j-d}{n-i-d} 
        q^{r(j-k)} 
        \binom{n-v-d}{j-d} \\
        & =
        (-1)^{d} q^{-r(k-d)} \sum_{j=d}^{n-d} (-1)^{i+j} \binom{n-j-d}{n-i-d} q^{r(j-d)} \binom{n-v-d}{j-d}\\
        & =
        (-1)^d 
        q^{-r(k-d)} 
        K_{i-d}(v-d; n-2d, q^r),
    \end{align*}
    where $K_i(v; N, q^r)$ is the \emph{Krawtchouk polynomial} 
	defined by:
   \[
   		K_i(v; N, q^r) 
   		:= 
   		\sum_{m=0}^i (-1)^m (q^r-1)^{i-m} 
   		\binom{v}{m} \binom{N-v}{i-m}.
   \]
\end{remark}
 
\begin{remark}
	Taking $r=1$ and $d\neq 0$ reduces Theorem~\ref{thm:main_theorem} to Bachoc's MacWilliams identity~\cite{bachoc1999} for harmonic weight distributions. 

\end{remark}  

\begin{example}\label{ex:macwilliams_verify}
We verify Theorem \ref{thm:main_theorem} on the $[5, 2]$ code $C$ over $\F_2$ generated by
\[
G = \begin{pmatrix} 1 & 1 & 1 & 0 & 0 \\ 0 & 0 & 0 & 1 & 1 \end{pmatrix},
\]
as presented in \cite[Example 4.1]{britz2024}. Let $d=1$ and define $f \in \Harm_1(5)$ by
\[
f(\{1\})=a, \quad f(\{2\})=b, \quad f(\{3\})=c, \quad f(\{4\})=d, \quad f(\{5\})=-(a+b+c+d).
\]
The non-zero codewords of $C$ have supports $\{1,2,3\}, \{4,5\}$, and $E$, yielding 
$$A_{3,f}^{(1)}(C) = a+b+c, \quad A_{2,f}^{(1)}(C) = -(a+b+c).$$ 
The dual code $C^\perp$ has dimension $3$, and its seven $2$-dimensional subcodes yield 
$$A_{3,f}^{(2)}(C^\perp) = a+b+c, \quad A_{4,f}^{(2)}(C^\perp) = -(a+b+c).$$

We evaluate the identity for $j=3$ and $r=2$. With $n=5, k=2$, and $d=1$, we have $j-k=1$. The sum over the dual distribution evaluates to
\[
\sum_{i=1}^4 \binom{5-i-1}{5-3-1} A_{i,f}^{(2)}(C^\perp) = \binom{1}{1}A_{3,f}^{(2)}(C^\perp) + \binom{0}{1}A_{4,f}^{(2)}(C^\perp) = a+b+c.
\]
To verify that this equals the corresponding sum over $C$, we expand the sum over $u \in \{0, 1, 2\}$. The term for $u=0$ vanishes since $\qbinom{1}{2} = 0$, and the term for $u=2$ vanishes because $A_{v,f}^{(2)}(C) = 0$ for all $v$ (as $\dim C = 2$ and $\widetilde{f}(E) = 0$). Thus for $u=1$, we have
\[
2^0 \qbinom{1}{1} \left[ \binom{4-3}{2}A_{3,f}^{(1)}(C) + \binom{4-2}{2}A_{2,f}^{(1)}(C) \right] = 1 \cdot \left[ 0 - (a+b+c) \right] = -(a+b+c).
\]
Multiplying this by $(-1)^1$ yields $a+b+c$. which confirms the identity.
\end{example}

\section{Moments of the harmonic higher weight enumerator}

\noindent In this section, we investigate the statistical moments of the harmonic higher weight enumerator for random linear codes, extending the probabilistic framework of Özen and Tekin \cite{ozen2013}. Let $C$ be a random $[n, k]$ code. Since $C$ is random, its harmonic weight distribution $A_{i,f}^{(r)}(C)$ consists of random variables, making the enumerator $W_{C,f}^{(r)}(z)$ a random polynomial. We analyze its statistical moments using standard probability theory.

\begin{definition}\label{def:moments}
Let $X$ be a discrete random variable with values~$x$ and 
probabilities $\Pr(X=x)$. Then the \emph{expectation} of $X$, denoted $\mathbb{E}[X]$, is defined as
\[ \mathbb{E}[X] = \sum_{x} x \Pr(X = x). \]
The \emph{covariance} of discrete random variables $X$ and $Y$ 
is defined as
\[ \Cov(X, Y) := \mathbb{E}[(X - \mathbb{E}[X])(Y - \mathbb{E}[Y])] = \mathbb{E}[XY] - \mathbb{E}[X]\mathbb{E}[Y]. \]
\end{definition}

By linearity of expectation, the expected value of the polynomial $W_{C,f}^{(r)}(z)$ is the sum of the expected values of its coefficients:
\[
\mathbb{E}\left[ W_{C,f}^{(r)}(z) \right] := \sum_{i=d}^{n-d} \mathbb{E}\left[ A_{i,f}^{(r)}(C) \right] z^{n-i}.
\]
Similarly, the covariance between two specializations $W_{C,f}^{(r)}(x)$ and $W_{C,f}^{(r)}(y)$ is given by the standard formula:
\[
\Cov\left( W_{C,f}^{(r)}(x), W_{C,f}^{(r)}(y) \right) := \mathbb{E}\left[ W_{C,f}^{(r)}(x) W_{C,f}^{(r)}(y) \right] - \mathbb{E}\left[ W_{C,f}^{(r)}(x) \right] \mathbb{E}\left[ W_{C,f}^{(r)}(y) \right].
\]

To facilitate the calculation of these moments over random matrices, we reformulate the enumerator in terms of the ranks of column submatrices of the generator matrix.

\subsection{Random linear codes and the rank-function formulation}

Let $G$ be a random $k \times n$ matrix whose entries are independent and uniformly distributed over $\F_q$, and let $C \subseteq \F_q^n$ be the linear code generated by the rows of $G$. We call $C$ a random $[n, k]$ code. For a subset $I \subseteq E$, let $G_I$ denote the $k \times |I|$ submatrix of $G$ formed by the columns indexed by $I$, and let
\[
	r_I := \operatorname{rank}(G_I) = \dim C^I.
\]

By Proposition \ref{prop:dim_formulas}(i), the dimension of the shortened code on the complement of $I$ is
\[
\dim C(E \setminus I) = k - \dim C^I = k - r_I.
\]
Consequently, the Gaussian binomial coefficient $\qbinom{k - r_I}{r}$ counts the number of $r$-dimensional subcodes of $C$ whose support is contained in $E \setminus I$. This observation suggests packaging these rank statistics into a polynomial weighted by the harmonic values $\widetilde{f}(I)$.

\begin{definition}\label{def:rank_function}
	Let $G$ be a $k \times n$ matrix.
	Let $f \in \Harm_d(n)$ and $r \ge 0$ be an integer. The \emph{harmonic rank function} of $G$ is
	\[
		\mathcal{R}_{G,f}^{(r)}(z) := \sum_{I \subseteq E} \qbinom{k - r_I}{r} \widetilde{f}(I) z^{|I|}.
	\]
\end{definition}

\begin{remark}
The coefficient of $z^{|I|}$ records the evaluation $\widetilde{f}(I)$ of the harmonic function on $I$, weighted by the number of $r$-dimensional subcodes of $C$ whose support is contained in $E \setminus I$. 
For $d=0$, $\mathcal{R}_{G,f}^{(r)}(z)$ coincides with the ordinary rank-function formulation studied by Özen and Tekin \cite{ozen2013}.
\end{remark}

\begin{proposition}\label{prop:rank}
Let $G$ be a full-rank $k \times n$ generator matrix of 
an $[n,k]$ code $C$. Then
\[
\mathcal{R}_{G,f}^{(r)}(z) = (-1)^d \frac{z^d}{(1+z)^d} W_{C,f}^{(r)}(1+z).
\]
\end{proposition}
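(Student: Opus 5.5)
The plan is a double-counting argument of the same kind as in Lemma~\ref{lem:harm_lemma1}: interpret the Gaussian binomial in $\mathcal{R}_{G,f}^{(r)}(z)$ as a count of subcodes, exchange the order of summation, and then evaluate the resulting inner sum over subsets.

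\textbf{Step 1 (the Gaussian binomial as a count).} Since $G$ has full rank, $C$ has dimension $k$ and $r_I=\dim C^I$. By Proposition~\ref{prop:dim_formulas}(i), $\dim C(E\setminus I)=k-r_I$. Hence $\qbinom{k-r_I}{r}$ is the number of $D\in\mathcal{D}_r(C)$ with $\Supp(D)\subseteq E\setminus I$. Equivalently, it counts those $D$ with $I\cap \Supp(D)=\emptyset$. Substituting this into Definition~\ref{def:rank_function} and interchanging the sums gives
\[
\mathcal{R}_{G,f}^{(r)}(z)=\sum_{D\in\mathcal{D}_r(C)}\ \sum_{I\subseteq E\setminus \Supp(D)} \widetilde{f}(I)\, z^{|I|}.
\]

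\textbf{Step 2 (the inner sum).} Fix $D$, and put $S=\Supp(D)$ with $|S|=i$ and $T=E\setminus S$. Expand $\widetilde{f}(I)=\sum_{Z\in E_d,\,Z\subseteq I}f(Z)$ and swap the order of summation. For each $Z\subseteq T$ with $|Z|=d$, the sets $I$ with $Z\subseteq I\subseteq T$ contribute $z^{d}(1+z)^{n-i-d}$ by the binomial theorem. Therefore
\[
\sum_{I\subseteq T}\widetilde{f}(I)z^{|I|}=z^d(1+z)^{n-i-d}\,\widetilde{f}(T).
\]
When $n-i<d$ both sides vanish, so this is consistent with Remark~\ref{Rem:BachocLem}.

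\textbf{Step 3 (reflection and collection).} The Reflection Identity (Lemma~\ref{prop:reflection}) gives $\widetilde{f}(T)=(-1)^d\widetilde{f}(S)$. Grouping the subcodes $D$ by their effective length $i$ yields
\[
\mathcal{R}_{G,f}^{(r)}(z)=(-1)^d z^d\sum_{i}A^{(r)}_{i,f}(C)(1+z)^{n-i-d}.
\]
By Definition~\ref{def:harm_weight_enum}, the right-hand side equals $(-1)^d z^d(1+z)^{-d}W^{(r)}_{C,f}(1+z)$.

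The quotient by $(1+z)^d$ is a genuine polynomial identity. Indeed, $W^{(r)}_{C,f}(z)$ is divisible by $z^d$, so $W^{(r)}_{C,f}(1+z)=(1+z)^d Z^{(r)}_{C,f}(1+z)$.

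\textbf{Main obstacle.} The only non-formal points are the two exchanges of summation and the exponent bookkeeping in Step~2. In particular, one must check that the factor $(1+z)^{n-i-d}$ matches $z^{n-i}$ in $W$ after the shift $z\mapsto 1+z$. Everything else follows directly from Proposition~\ref{prop:dim_formulas} and Lemma~\ref{prop:reflection}.
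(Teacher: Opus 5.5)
Your proof is correct and is essentially the paper's argument with Lemma~\ref{lem:harm_lemma1} inlined. The paper multiplies that lemma (itself a double count of pairs $(D,Y)$ with $|Y|=n-j$ and $Y\cap\Supp(D)=\emptyset$) by $z^{n-j}$, sums over $j$, passes through $N^m_{j,f}(C)$ and applies the reflection identity twice; you instead double count the pairs $(D,I)$ weighted by $\widetilde{f}(I)z^{|I|}$ directly and reflect only once, which also avoids the paper's tacit use of Lemma~\ref{lem:harm_lemma1} outside its stated range $d\le j\le n-d$ (harmless, since both sides vanish there).
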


\begin{proof}
We begin with Lemma \ref{lem:harm_lemma1}, which we multiply by $z^{n-j}$ and sum over all integers $j$:
\[
\underbrace{\sum_{j} z^{n-j} \sum_{m} \qbinom{m}{r} N_{j,f}^m(C)}_{(*)} = \underbrace{\sum_{j} z^{n-j} \sum_{i=d}^{n-d} \binom{n-i-d}{n-j-d} A_{i,f}^{(r)}(C)}_{(**)}.
\]

For the left-hand side $(*)$, we unfold the definition of $N_{j,f}^m(C)$:
\[
(*) = \sum_{j} \sum_{\substack{X \subseteq E \\ |X|=j \\ \dim C(X)=m}} \qbinom{m}{r} \widetilde{f}(X) z^{n-j} = \sum_{X \subseteq E} \qbinom{\dim C(X)}{r} \widetilde{f}(X) z^{n-|X|}.
\]
Set $I = E \setminus X$, so that $|I| = n - |X|$. By Proposition \ref{prop:dim_formulas}(i) and reflection identity (Lemma~\ref{prop:reflection}),
\[
\qbinom{\dim C(X)}{r} = \qbinom{k-r_I}{r}, \qquad \widetilde{f}(X) = \widetilde{f}(E \setminus I) = (-1)^d \widetilde{f}(I).
\]
Substituting these into the sum gives
\[
(*) = (-1)^d \sum_{I \subseteq E} \qbinom{k-r_I}{r} \widetilde{f}(I) z^{|I|} = (-1)^d \mathcal{R}_{G,f}^{(r)}(z).
\]

For the right-hand side $(**)$, we interchange the sums and evaluate the inner sum over $j$ using the binomial identity
\[
\sum_{j} \binom{n-i-d}{n-j-d} z^{n-j} = z^d (1+z)^{n-i-d}.
\]
Thus
\[
(**) = z^d \sum_{i=d}^{n-d} A_{i,f}^{(r)}(C) (1+z)^{n-i-d} = \frac{z^d}{(1+z)^d} \sum_{i=d}^{n-d} A_{i,f}^{(r)}(C) (1+z)^{n-i} = \frac{z^d}{(1+z)^d} W_{C,f}^{(r)}(1+z).
\]
Equating $(*)$ and $(**)$ and dividing by $(-1)^d$ yields the claimed identity.
\end{proof}

\begin{remark}\label{rem:R_divisible}
Since $\widetilde{f}(I) = 0$ for $|I| < d$, the summands in the definition of $\mathcal{R}_{G,f}^{(r)}(z)$ vanish whenever $|I| < d$. This implies that the polynomial $\mathcal{R}_{G,f}^{(r)}(z)$ is always divisible by $z^d$, and we may write:
\[
\mathcal{R}_{G,f}^{(r)}(z) = z^d \sum_{I \subseteq E} \qbinom{k-r_I}{r} \widetilde{f}(I) z^{|I|-d}.
\]
\end{remark}

\begin{remark}
For $d=0$, Proposition \ref{prop:rank} reduces to $\mathcal{R}_{G}^{(r)}(z) = W_{C}^{(r)}(1+z)$, which recovers \cite[Eq.~(1)]{ozen2013}.
\end{remark}

\begin{corollary}\label{cor:W_from_R}
The harmonic $r$-th higher weight enumerator $W_{C,f}^{(r)}(z)$ can be expressed directly in terms of the harmonic rank function $\mathcal{R}_{G,f}^{(r)}(z)$ via the identity:
\[
W_{C,f}^{(r)}(z) = (-1)^d \frac{z^d}{(z-1)^d} \mathcal{R}_{G,f}^{(r)}(z-1).
\]
\end{corollary}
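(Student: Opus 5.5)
The corollary is Proposition~\ref{prop:rank} with the variable shifted by one, so the plan is a direct substitution. Proposition~\ref{prop:rank} states, as an identity of rational functions in $z$ (and in fact of polynomials, since both sides are polynomials once the factor $(1+z)^d$ is cleared),
\[
\mathcal{R}_{G,f}^{(r)}(z) = (-1)^d \frac{z^d}{(1+z)^d} W_{C,f}^{(r)}(1+z).
\]
First I will rearrange this to isolate the enumerator:
\[
W_{C,f}^{(r)}(1+z) = (-1)^d \frac{(1+z)^d}{z^d}\mathcal{R}_{G,f}^{(r)}(z).
\]
This uses $(-1)^{-d}=(-1)^d$ and is valid wherever $z\neq 0$ and $z\neq -1$.

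Next I will replace $z$ by $z-1$. The left side becomes $W_{C,f}^{(r)}(z)$, and the right side becomes $(-1)^d \frac{z^d}{(z-1)^d}\mathcal{R}_{G,f}^{(r)}(z-1)$, which is exactly the claimed formula.

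The only point needing care is that the division is legitimate as an identity of polynomials and not merely of functions. By Remark~\ref{rem:R_divisible}, $\mathcal{R}_{G,f}^{(r)}(z)$ is divisible by $z^d$, so $\mathcal{R}_{G,f}^{(r)}(z-1)$ is divisible by $(z-1)^d$. Hence the right-hand side is a genuine polynomial in $z$. Alternatively, the two sides are rational functions that agree at infinitely many points, so they coincide in $\mathbb{Q}(z)$ and therefore as polynomials. I do not expect any real obstacle beyond this bookkeeping.
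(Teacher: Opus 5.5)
Your proposal is correct and is essentially the paper's own argument: the paper substitutes $w=z-1$ into Proposition~\ref{prop:rank} and then multiplies by $(-1)^d z^d/(z-1)^d$, while you isolate $W_{C,f}^{(r)}(1+z)$ first and substitute afterwards, which is an immaterial reordering. Your extra bookkeeping that the identity holds as polynomials (via Remark~\ref{rem:R_divisible}) is a clarification the paper omits; note only that the coefficients are real, so the relevant field is $\mathbb{R}(z)$ rather than $\mathbb{Q}(z)$.
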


\begin{proof}
By Proposition \ref{prop:rank}, we have the polynomial identity
\[
\mathcal{R}_{G,f}^{(r)}(w) = (-1)^d \frac{w^d}{(1+w)^d} W_{C,f}^{(r)}(1+w).
\]
Substituting $w = z - 1$, we note that $1 + w = z$. This yields
\[
\mathcal{R}_{G,f}^{(r)}(z-1) = (-1)^d \frac{(z-1)^d}{z^d} W_{C,f}^{(r)}(z).
\]
Multiplying both sides by $(-1)^d \frac{z^d}{(z-1)^d}$ gives the claimed identity:
\[
W_{C,f}^{(r)}(z) = (-1)^d \frac{z^d}{(z-1)^d} \mathcal{R}_{G,f}^{(r)}(z-1). \qedhere
\]
\end{proof}

\subsection{The Expectation (First Moment)}

Özen and Tekin \cite[Theorem 3.1]{ozen2013} shows that for a uniformly random matrix, the expected value of the rank coefficient depends {only} on the size of the set $I$, evaluating exactly to:
\[
	\mathbb{E}\left[ \qbinom{k - r_I}{r} \right] 
	= 
	\qbinom{k}{r} q^{-r|I|}.
\]
 
Since $\widetilde{f}(I)$ and $z^{|I|}$ are deterministic,
and by linearity of expectation, 
we have
\begin{align*}
	\mathbb{E}\left[ \mathcal{R}_{G,f}^{(r)}(z) \right] 
	& = 
	\sum_{I \subseteq E} 
	\widetilde{f}(I) 
	\mathbb{E}\left[ \qbinom{k - r_I}{r} \right] 
	z^{|I|}\\
	& =
	\qbinom{k}{r} 
	\sum_{I \subseteq E} 
	\widetilde{f}(I) (q^{-r}z)^{|I|}\\
	& =
	\qbinom{k}{r} 
	\sum_{I \subseteq E}
	\widetilde{f}(I)\, t^{|I|}, \qquad \text{where } t = q^{-r}z.
\end{align*}

\noindent
Now by definition~(\ref{Equ:tildef}) and swapping the summation, we get
\[
	\sum_{I \subseteq E} \widetilde{f}(I)\, t^{|I|} 
	= 
	\sum_{I \subseteq E} \sum_{\substack{Z \subseteq I \\ |Z|=d}} 
	f(Z)\, t^{|I|} = \sum_{Z \in \binom{E}{d}} f(Z) \sum_{\substack{I \subseteq E \\ Z \subseteq I}} t^{|I|}.
\]
For a fixed $d$-subset $Z$, the inner sum over all supersets $I$ evaluates to $t^d (1+t)^{n-d}$. Thus
\[
	\sum_{I \subseteq E} \widetilde{f}(I)\, t^{|I|} 
	= 
	t^d (1+t)^{n-d} 
	\sum_{Z \in \binom{E}{d}} f(Z) 
	= 
	t^d (1+t)^{n-d} \widetilde{f}(E).
\]
Since $\widetilde{f}(E) = 0$ 
for any $f\in\textrm{Harm}_{d}(n)$ with $d \neq 0$,
the entire sum vanishes.
This yields our first moment theorem:

\begin{theorem}\label{thm:expectation}
Let $C$ be a random $[n, k]$ code, and let $f \in \Harm_d(n)$ 
with $d \neq 0$. The expectation of the harmonic rank function is zero.
That is,
\[ \mathbb{E}\left[ \mathcal{R}_{G,f}^{(r)}(z) \right] = 0. \]
\end{theorem}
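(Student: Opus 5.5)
The computation just before the statement already contains essentially the whole argument; the plan is to organize it in three steps.

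\textbf{Step 1: linearity of expectation.} The harmonic values $\widetilde{f}(I)$ and the monomials $z^{|I|}$ are deterministic. Only the Gaussian binomials $\qbinom{k-r_I}{r}$ depend on the random matrix $G$. By linearity,
\[
\mathbb{E}\bigl[\mathcal{R}_{G,f}^{(r)}(z)\bigr]=\sum_{I\subseteq E}\widetilde{f}(I)\,\mathbb{E}\Bigl[\qbinom{k-r_I}{r}\Bigr]z^{|I|}.
\]

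\textbf{Step 2: the key probabilistic input.} We invoke the result of \"Ozen and Tekin, $\mathbb{E}\bigl[\qbinom{k-r_I}{r}\bigr]=\qbinom{k}{r}q^{-r|I|}$. The point is that this depends only on $|I|$.

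If one wants a self-contained justification, $\qbinom{k-r_I}{r}$ counts the $r$-dimensional subspaces $U\subseteq\F_q^k$ annihilating every column of $G_I$. For a fixed $U$, the columns of $G$ are independent and uniform, so each column lies in $U^\perp$ with probability $q^{-r}$. Summing over the $\qbinom{k}{r}$ choices of $U$ gives the formula.

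This step is the only genuinely nontrivial one. Its role is to show that the expectation is a function of $|I|$ alone, which is exactly what lets the harmonic symmetry act.

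\textbf{Step 3: the harmonic cancellation.} Set $t=q^{-r}z$. The expectation becomes $\qbinom{k}{r}\sum_{I}\widetilde{f}(I)t^{|I|}$, and there are two equivalent ways to show the sum vanishes.

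The first is to group by size: $\sum_{t'=0}^n t^{t'}\sum_{I\in E_{t'}}\widetilde{f}(I)$. For $t'<d$ the inner sum vanishes by Remark~\ref{Rem:BachocLem}. For $t'\ge d$ it vanishes by Remark~\ref{Rem:New}.

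The second is to unfold $\widetilde{f}$ via (\ref{Equ:tildef}) and swap the sums. This gives $t^d(1+t)^{n-d}\sum_{Z\in E_d}f(Z)$. That last sum is zero for $d\ge1$, by Remark~\ref{Rem:Gamma} with $i=0$ (equivalently, $\widetilde{f}(E)=0$).

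Either way the expectation is identically zero as a polynomial in $z$, which proves the theorem. As a consequence, Corollary~\ref{cor:W_from_R} gives $\mathbb{E}[W_{C,f}^{(r)}(z)]=0$ as well.
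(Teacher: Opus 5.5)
Your proof is correct and follows the paper's own argument: linearity of expectation, then the \"{O}zen--Tekin formula $\mathbb{E}\bigl[\qbinom{k-r_I}{r}\bigr]=\qbinom{k}{r}q^{-r|I|}$, then the swap of sums giving $t^d(1+t)^{n-d}\widetilde{f}(E)=0$. Your subspace-counting derivation of that formula is a nice self-contained addition, since the paper only cites it.

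One caution about your closing remark: Corollary~\ref{cor:W_from_R} comes from Proposition~\ref{prop:rank}, which assumes $G$ has full rank, and a uniformly random $G$ need not, so $\mathbb{E}[W_{C,f}^{(r)}(z)]=0$ does not follow from the theorem verbatim. The conclusion is nonetheless true. By column-permutation invariance, the expected number of $r$-dimensional subcodes with a given support depends only on the size of that support, and $\sum_{|S|=s}\widetilde{f}(S)=0$ for every $s$.
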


\subsection{The Covariance (Second Moment)}

Since $\mathbb{E}[\mathcal{R}_{G,f}^{(r)}(z)] = 0$ (Theorem~\ref{thm:expectation}),
the covariance 
\[
\Cov\left( \mathcal{R}_{G,f}^{(r)}(x),\, \mathcal{R}_{G,f}^{(r)}(y) \right) = \mathbb{E}\left[ \mathcal{R}_{G,f}^{(r)}(x)\, \mathcal{R}_{G,f}^{(r)}(y) \right].
\]

\begin{theorem}[Covariance Formula]\label{thm:cov}
Let $f \in \Harm_d(n)$ with $d \ge 1$. Set
\[
\alpha = q^{-r}x, \qquad \beta = q^{-r}y, \qquad c_m = q^{r-m}.
\]
Define the \emph{pair correlation} of $f$ by
\[
\mathcal{P}_s := \sum_{\substack{Z, W \in \binom{E}{d} \\ |Z \cap W| = s}} f(Z) f(W), \qquad s = 0, 1, \dots, d.
\]
Because $\sum_{s=0}^d \mathcal{P}_s = \widetilde{f}(E)^2 = 0$, we have $\mathcal{P}_d = -\sum_{s=0}^{d-1} \mathcal{P}_s$. The covariance of the harmonic rank function is
\begin{multline}\label{eq:cov}
    \Cov\left( \mathcal{R}_{G,f}^{(r)}(x),\, \mathcal{R}_{G,f}^{(r)}(y) \right) = \qbinom{k}{r} \alpha^d \beta^d \sum_{m=0}^{r} q^{m^2} \qbinom{k-r}{m} \qbinom{r}{m} \\
    \times \sum_{s=0}^{d} \mathcal{P}_s c_m^s (1 + \alpha c_m)^{d-s} (1 + \beta c_m)^{d-s} (1 + \alpha + \beta + \alpha \beta c_m)^{n-2d+s}.
\end{multline}

\end{theorem}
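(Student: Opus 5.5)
We would compute the second moment directly from Definition~\ref{def:rank_function}. Since Theorem~\ref{thm:expectation} shows the mean vanishes, the covariance is $\mathbb{E}[\mathcal{R}_{G,f}^{(r)}(x)\mathcal{R}_{G,f}^{(r)}(y)]$. By linearity this equals
\[
\sum_{I,J\subseteq E}\widetilde{f}(I)\widetilde{f}(J)\,x^{|I|}y^{|J|}\,\mathbb{E}\!\left[\qbinom{k-r_I}{r}\qbinom{k-r_J}{r}\right].
\]
So the work splits into two steps: (a) compute the joint expectation, which should depend only on $|I|$, $|J|$ and $|I\cap J|$; (b) carry out the harmonic sum over $I,J$ by expanding $\widetilde{f}$.

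\textbf{Step (a): the joint expectation.} We interpret $\qbinom{k-r_I}{r}$ as the number of $r$-dimensional subspaces $V\subseteq\F_q^k$ with $V^{T}G_I=0$, that is, $V\subseteq$ the left kernel of $G_I$, which has dimension $k-r_I$. The product of the two Gaussian binomials therefore counts pairs $(V_1,V_2)$ of $r$-spaces with $V_1$ annihilating every column in $I$ and $V_2$ annihilating every column in $J$.

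Exchanging sum and expectation, we fix the pair $(V_1,V_2)$ and use that the columns of $G$ are independent and uniform in $\F_q^k$. The probabilities per column are:
\begin{itemize}
\item a column indexed by $I\setminus J$ lies in $V_1^{\perp}$ with probability $q^{-r}$, and similarly for $J\setminus I$ with $V_2^{\perp}$;
\item a column indexed by $I\cap J$ lies in $(V_1+V_2)^{\perp}$ with probability $q^{-\dim(V_1+V_2)}$.
\end{itemize}
Write $\dim(V_1+V_2)=r+m$ with $0\le m\le r$. The probability for the pair is then $q^{-r|I|}q^{-r|J|}q^{(r-m)|I\cap J|}=q^{-r|I|}q^{-r|J|}c_m^{|I\cap J|}$.

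Next we count the pairs. Choose $V_1$ in $\qbinom{k}{r}$ ways. Then choose $V_2$ with $\dim(V_1\cap V_2)=r-m$; the standard count gives $q^{m^2}\qbinom{r}{m}\qbinom{k-r}{m}$ choices. Thus
\[
\mathbb{E}\!\left[\qbinom{k-r_I}{r}\qbinom{k-r_J}{r}\right]=q^{-r(|I|+|J|)}\qbinom{k}{r}\sum_{m=0}^{r}q^{m^2}\qbinom{k-r}{m}\qbinom{r}{m}\,c_m^{|I\cap J|}.
\]

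\textbf{Step (b): the harmonic sum.} Substituting, the factors $x^{|I|}y^{|J|}q^{-r(|I|+|J|)}$ become $\alpha^{|I|}\beta^{|J|}$. For each $m$ it remains to evaluate $\sum_{I,J}\widetilde{f}(I)\widetilde{f}(J)\alpha^{|I|}\beta^{|J|}c^{|I\cap J|}$ with $c=c_m$. As in the proof of Theorem~\ref{thm:expectation}, we expand $\widetilde{f}(I)=\sum_{Z\subseteq I}f(Z)$ and $\widetilde{f}(J)=\sum_{W\subseteq J}f(W)$ and swap sums. Fix $Z,W\in\binom{E}{d}$ with $|Z\cap W|=s$. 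The sum over $I\supseteq Z$, $J\supseteq W$ factorizes over the elements of $E$:
\begin{itemize}
\item each element of $Z\cap W$ is forced into both sets and contributes $\alpha\beta c$;
\item each element of $Z\setminus W$ is forced into $I$ and optional in $J$, contributing $\alpha(1+\beta c)$;
\item each element of $W\setminus Z$ contributes $\beta(1+\alpha c)$;
\item each of the remaining $n-2d+s$ elements contributes $1+\alpha+\beta+\alpha\beta c$.
\end{itemize}
The product is $\alpha^d\beta^d c^s(1+\alpha c)^{d-s}(1+\beta c)^{d-s}(1+\alpha+\beta+\alpha\beta c)^{n-2d+s}$. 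Grouping the pairs $(Z,W)$ by $s$ produces the pair correlations $\mathcal{P}_s$, which yields \eqref{eq:cov}.

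The main obstacle is step (a). One must justify the subspace-pair interpretation and the per-column probabilities, and get the parametrization of $m$ right: it is $m=\dim(V_1+V_2)-r$, not $\dim(V_1\cap V_2)$, that matches $q^{m^2}\qbinom{k-r}{m}\qbinom{r}{m}$ and $c_m=q^{r-m}$. We would check this against the case $I=J$ and against \cite[Theorem 3.1]{ozen2013} by summing over $m$ at $I\cap J=\emptyset$.
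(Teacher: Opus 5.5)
Your argument is correct. Its second half is exactly the paper's Step 3: expanding $\widetilde{f}$, factorizing the sum over $I\supseteq Z$, $J\supseteq W$ across the four regions, and grouping pairs by $s=|Z\cap W|$.

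The difference lies in where the rank statistics come from.

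\textbf{The paper's route.} It writes the covariance bilinearly as $\sum_{I,J}\widetilde{f}(I)\widetilde{f}(J)x^{|I|}y^{|J|}\Cov\bigl(\qbinom{k-r_I}{r},\qbinom{k-r_J}{r}\bigr)$ and imports the \"{O}zen--Tekin rank-covariance formula (their Theorem 3.2) as a black box. It then has to dispose of the resulting $-1$ term, i.e.\ the product $\Pi=\bigl(\sum_I\widetilde{f}(I)\alpha^{|I|}\bigr)\bigl(\sum_J\widetilde{f}(J)\beta^{|J|}\bigr)$, which vanishes because $\widetilde{f}(E)=0$.

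\textbf{Your route.} You use $\mathbb{E}[\mathcal{R}_{G,f}^{(r)}]=0$ up front to reduce to the raw second moment. You then compute $\mathbb{E}\bigl[\qbinom{k-r_I}{r}\qbinom{k-r_J}{r}\bigr]$ yourself:
\begin{itemize}
\item you count pairs $(V_1,V_2)$ of $r$-subspaces of the left kernels of $G_I$ and $G_J$;
\item independent uniform columns must land in $V_1^{\perp}$, $V_2^{\perp}$ or $(V_1+V_2)^{\perp}$;
\item the number of $V_2$ meeting $V_1$ in dimension $r-m$ is $q^{m^2}\qbinom{r}{m}\qbinom{k-r}{m}$.
\end{itemize}
Your parametrization $\dim(V_1+V_2)=r+m$ is the right one.

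\textbf{What each buys.} Your derivation makes the proof self-contained. Subtracting $q^{-r(|I|+|J|)}\qbinom{k}{r}^2$ and using $\sum_m q^{m^2}\qbinom{k-r}{m}\qbinom{r}{m}=\qbinom{k}{r}$ recovers the cited covariance formula. The analogous one-subspace count gives $\mathbb{E}\bigl[\qbinom{k-r_I}{r}\bigr]=\qbinom{k}{r}q^{-r|I|}$, which the paper also cites for the expectation. The paper's route is shorter on the page but depends on the external results and an extra cancellation step. The vanishing $\widetilde{f}(E)=0$ enters both arguments, once: in $\Pi=0$ for the paper, and in the vanishing mean for you.
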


\begin{proof}
By bilinearity of covariance and the fact that $\mathbb{E}\left[\mathcal{R}_{G,f}^{(r)}(z)\right] = 0$,
\begin{equation}\label{eq:cov-step1}
\Cov\left( \mathcal{R}_{G,f}^{(r)}(x), \mathcal{R}_{G,f}^{(r)}(y) \right) = \sum_{I,J \subseteq E} \widetilde{f}(I) \widetilde{f}(J) x^{|I|} y^{|J|} \Cov\left( \qbinom{k-r_I}{r}, \qbinom{k-r_J}{r} \right).
\end{equation}

\medskip
\noindent\textit{Step 1: Apply the rank-covariance formula.}
\smallskip

For a uniformly random matrix, Özen and Tekin \cite[Theorem 3.2]{ozen2013} proved that
\begin{multline}\label{eq:ozen-cov}
\Cov\left( \qbinom{k-r_I}{r}, \qbinom{k-r_J}{r} \right) \\
= q^{-r(|I|+|J|)} \qbinom{k}{r} \sum_{m=0}^r q^{m^2} \qbinom{k-r}{m} \qbinom{r}{m} \left( q^{|I \cap J|(r-m)} - 1 \right).
\end{multline}
Substituting \eqref{eq:ozen-cov} into \eqref{eq:cov-step1} and using $\alpha = q^{-r}x$, $\beta = q^{-r}y$, and $c_m = q^{r-m}$, we obtain
\[
\Cov\left( \mathcal{R}_{G,f}^{(r)}(x), \mathcal{R}_{G,f}^{(r)}(y) \right) = \qbinom{k}{r} \sum_{m=0}^r q^{m^2} \qbinom{k-r}{m} \qbinom{r}{m} \Lambda_m,
\]
where
\[
\Lambda_m = \sum_{I,J \subseteq E} \widetilde{f}(I) \widetilde{f}(J) \alpha^{|I|} \beta^{|J|} \left( c_m^{|I \cap J|} - 1 \right).
\]

\medskip
\noindent\textit{Step 2: Split $\Lambda_m$ and remove the product term.}
\smallskip

Expanding the bracket gives
\[
\Lambda_m = \underbrace{\sum_{I,J} \widetilde{f}(I) \widetilde{f}(J) \alpha^{|I|} \beta^{|J|} c_m^{|I \cap J|}}_{=:\,\Psi_m} - \underbrace{\left( \sum_{I} \widetilde{f}(I) \alpha^{|I|} \right) \left( \sum_{J} \widetilde{f}(J) \beta^{|J|} \right)}_{=:\,\Pi}.
\]
The product term $\Pi$ vanishes for $d \ge 1$, because
\[
\sum_{I} \widetilde{f}(I) \alpha^{|I|} = \alpha^d (1+\alpha)^{n-d} \widetilde{f}(E) = 0,
\]
and similarly for the sum over $J$. Hence $\Lambda_m = \Psi_m$.

\medskip
\noindent\textit{Step 3: Evaluate $\Psi_m$ by expanding $\widetilde{f}$.}
\smallskip

Writing $\widetilde{f}(I) = \sum_{Z \in \binom{I}{d}} f(Z)$ and $\widetilde{f}(J) = \sum_{W \in \binom{J}{d}} f(W)$, and swapping the order of summation, we get
\[
\Psi_m = \sum_{Z,W \in \binom{E}{d}} f(Z) f(W) G_{Z,W}(\alpha,\beta,c_m),
\]
where
\[
G_{Z,W}(\alpha,\beta,c_m) := \sum_{\substack{I \supseteq Z \\ J \supseteq W}} \alpha^{|I|} \beta^{|J|} c_m^{|I \cap J|}.
\]

Fix $Z,W$ with $|Z \cap W| = s$. Decompose $E$ into four disjoint regions:
\[
\begin{array}{c|c}
\text{Region} & \text{Size} \\ \hline
Z \cap W & s \\
Z \setminus W & d-s \\
W \setminus Z & d-s \\
E \setminus (Z \cup W) & n-2d+s
\end{array}
\]
For each element $e$, the contribution to $G_{Z,W}$ from the choices $(e \in I?, e \in J?)$ is:
\[
\begin{array}{c|c|c}
\text{Region} & \text{Constraint} & \text{Factor} \\ \hline
Z \cap W & e \in I,\; e \in J & \alpha\beta c_m \\[2pt]
Z \setminus W & e \in I,\; e \notin J \text{ optional} & \alpha(1 + \beta c_m) \\[2pt]
W \setminus Z & e \notin I \text{ optional},\; e \in J & \beta(1 + \alpha c_m) \\[2pt]
E \setminus (Z \cup W) & \text{both optional} & 1 + \alpha + \beta + \alpha\beta c_m
\end{array}
\]
Multiplying the independent contributions over all elements yields
\[
G_{Z,W}(\alpha,\beta,c_m) = \alpha^d \beta^d c_m^s (1 + \alpha c_m)^{d-s} (1 + \beta c_m)^{d-s} (1 + \alpha + \beta + \alpha\beta c_m)^{n-2d+s}.
\]
Since $G_{Z,W}$ depends only on $s = |Z \cap W|$, we may group the pairs $(Z,W)$ by their intersection size:
\[
\Psi_m = \alpha^d \beta^d \sum_{s=0}^d \mathcal{P}_s c_m^s (1 + \alpha c_m)^{d-s} (1 + \beta c_m)^{d-s} (1 + \alpha + \beta + \alpha\beta c_m)^{n-2d+s}.
\]
Substituting $\Lambda_m = \Psi_m$ gives formula \eqref{eq:cov}.

\medskip
\noindent\textit{Step 4: The $m=r$ term vanishes.}
\smallskip

When $m=r$, we have $c_r = q^{r-r} = 1$. Then
\[
1 + \alpha + \beta + \alpha\beta c_r = (1+\alpha)(1+\beta),
\]
and the $m=r$ term evaluates to
\[
\alpha^d \beta^d (1+\alpha)^{n-d}(1+\beta)^{n-d} \sum_{s=0}^d \mathcal{P}_s = \alpha^d \beta^d (1+\alpha)^{n-d}(1+\beta)^{n-d} \widetilde{f}(E)^2 = 0.
\]
Thus only the terms with $m < r$ contribute non-trivially.
\end{proof}

\section{Examples for Moments of Random Codes}

\begin{example}\label{ex:covariance}
We explicitly calculate the covariance for a small random code. Let
\[
q=2, \qquad n=2, \qquad k=1, \qquad r=1, \qquad d=1.
\]
Define $f \in \Harm_1(2)$ by
\[
f(\{1\}) = 1, \qquad f(\{2\}) = -1.
\]
The set-extension values are
\[
\widetilde{f}(\{1\}) = 1, \qquad \widetilde{f}(\{2\}) = -1, \qquad \widetilde{f}(\emptyset) = \widetilde{f}(E) = 0.
\]

There are $2^2 = 4$ possible $1 \times 2$ generator matrices $G = [c_1, c_2]$ over $\F_2$, each chosen with probability $1/4$. For $r=1$, the rank coefficient {${\qbinom{1-r_I}{1}}$} equals $1$ if $r_I=0$ (the column is zero) and $0$ if $r_I=1$. Let
\[
z_i = 
\begin{cases}
1 & \text{if } c_i = 0, \\[2pt]
0 & \text{if } c_i = 1.
\end{cases}
\]
Then the harmonic rank function for a specific matrix is
\[
\mathcal{R}_{G,f}^{(1)}(y) = z_1 \widetilde{f}(\{1\}) y + z_2 \widetilde{f}(\{2\}) y = (z_1 - z_2)y.
\]

For brevity, we will write $\mathcal{R}(z)$ to denote $\mathcal{R}_{G,f}^{(1)}(z)$ for the remainder of this example.
\medskip

We evaluate $\mathcal{R}$ for the four matrices:
\begin{itemize}
    \item $G_1 = [0, 0]$: $(z_1, z_2) = (1, 1)$, so
    \[
    \mathcal{R}_1 = (1-1)y = 0.
    \]
    
    \item $G_2 = [1, 0]$: $(z_1, z_2) = (0, 1)$, so
    \[
    \mathcal{R}_2 = (0-1)y = -y.
    \]
    
    \item $G_3 = [0, 1]$: $(z_1, z_2) = (1, 0)$, so
    \[
    \mathcal{R}_3 = (1-0)y = y.
    \]
    
    \item $G_4 = [1, 1]$: $(z_1, z_2) = (0, 0)$, so
    \[
    \mathcal{R}_4 = (0-0)y = 0.
    \]
\end{itemize}

The expectation is therefore
\[
\mathbb{E}[\mathcal{R}] = \frac{1}{4}\bigl(0 - y + y + 0\bigr) = 0,
\]
confirming Theorem \ref{thm:expectation}.

\medskip
To find the covariance, we compute the expectation of the product $\mathcal{R}(x)\mathcal{R}(y)$. Since
\[
\mathcal{R}(x) = (z_1 - z_2)x \quad\text{and}\quad \mathcal{R}(y) = (z_1 - z_2)y,
\]
we have
\[
\mathcal{R}(x)\mathcal{R}(y) = (z_1 - z_2)^2 xy.
\]
Because $z_i \in \{0,1\}$, it follows that $z_i^2 = z_i$, and therefore
\[
(z_1 - z_2)^2 = z_1 - 2z_1 z_2 + z_2.
\]
Evaluating this expression for the four matrices gives the values $0, 1, 1, 0$, respectively. Hence
\begin{align*}
\mathrm{Cov}\bigl(\mathcal{R}(x), \mathcal{R}(y)\bigr)
&= \mathbb{E}\bigl[\mathcal{R}(x)\mathcal{R}(y)\bigr] \\[4pt]
&= \frac{1}{4}\bigl(0 + xy + xy + 0\bigr) \\[4pt]
&= \frac{1}{2}xy. 
\end{align*}

\end{example}

\begin{example}\label{ex:covariance_d3}
We calculate the moments for $d=3$ to demonstrate that while the expectation vanishes, the covariance is strictly non-zero. Let
\[
q=2, \qquad n=32, \qquad k=1, \qquad r=1, \qquad d=3.
\]

Let $E$ be a set of $32$ elements, partitioned into $4$ disjoint subsets
\[
E_1,\; E_2,\; E_3,\; E_4,
\]
each of size $8$. For each $E_i$, let it be further partitioned into two sets of size $4$:
\[
E_i = A_i \cup B_i.
\]
We define $f \in \Harm_3(32)$ as a sum of functions $f_i$ supported on $E_i$. For a $3$-subset $Z \subseteq E_i$, we define $f_i(Z)$ based on $x = |Z \cap A_i|$ as follows:
\[
f_i(Z) = 
\begin{cases} 
\phantom{-}2v_i & \text{if } x = 0, \\[4pt]
-v_i & \text{if } x = 1, \\[4pt]
\phantom{-}v_i & \text{if } x = 2, \\[4pt]
-2v_i & \text{if } x = 3,
\end{cases}
\]
where $(v_1, v_2, v_3, v_4) = (a, b, c, d)$. For any $3$-subset $Z$ that intersects more than one $E_i$, we set $f(Z) = 0$.

\medskip
\noindent\textit{Step 1: Verifying that $f$ is harmonic.}
\smallskip

For $f$ to be harmonic, we need
\[
\sum_{e \in E \setminus Y} f(Y \cup \{e\}) = 0
\qquad\text{for all } Y \in \binom{E}{2}.
\]
If $Y$ intersects multiple $E_i$, the sum is trivially $0$ because all $Y \cup \{e\}$ will also intersect multiple $E_i$. If $Y \subset E_i$, the sum only involves $e \in E_i$. By the construction of $f_i$, the sum over $e \in A_i$ and $e \in B_i$ perfectly cancels out:
\begin{itemize}
    \item If $Y \subset A_i$ (so $x=2$): adding $e \in A_i$ gives $2$ choices with $x=3 \to -2v_i$, and adding $e \in B_i$ gives $4$ choices with $x=2 \to v_i$. The sum is
    \[
    2(-2v_i) + 4(v_i) = 0.
    \]
    
    \item If $Y \subset B_i$ (so $x=0$): adding $e \in A_i$ gives $4$ choices with $x=1 \to -v_i$, and adding $e \in B_i$ gives $2$ choices with $x=0 \to 2v_i$. The sum is
    \[
    4(-v_i) + 2(2v_i) = 0.
    \]
    
    \item If $Y$ has $1$ element in $A_i$ and $1$ in $B_i$ (so $x=1$): adding $e \in A_i$ gives $3$ choices with $x=2 \to v_i$, and adding $e \in B_i$ gives $3$ choices with $x=1 \to -v_i$. The sum is
    \[
    3(v_i) + 3(-v_i) = 0.
    \]
\end{itemize}
Thus, $f$ is harmonic.

\medskip
\noindent\textit{Step 2: Derivation of the expectation (first moment).}
\smallskip

By Theorem \ref{thm:expectation}, the expectation is structurally zero. We derive this explicitly from the definition. For $k=1$ and $r=1$, the expected rank coefficient is
\[
\mathbb{E}\!\left[{\qbinom{1-r_I}{1}}\right] = 2^{-|I|}.
\]
Thus
\[
\mathbb{E}[\mathcal{R}(z)] = \sum_{I \subseteq E} \widetilde{f}(I) \left(\frac{z}{2}\right)^{|I|}.
\]
To evaluate this sum, we expand
\[
\widetilde{f}(I) = \sum_{\substack{Z \subseteq I \\ |Z|=3}} f(Z)
\]
and swap the order of summation. Let $t = z/2$. Then
\begin{align*}
\sum_{I \subseteq E} \widetilde{f}(I)\, t^{|I|}
&= \sum_{I \subseteq E} \sum_{\substack{Z \subseteq I \\ |Z|=3}} f(Z)\, t^{|I|} \\[4pt]
&= \sum_{Z \in \binom{E}{3}} f(Z) \sum_{\substack{I \subseteq E \\ Z \subseteq I}} t^{|I|}.
\end{align*}
For a fixed $3$-element set $Z$, the inner sum runs over all supersets $I \supseteq Z$. Since $|I| = 3 + |I \setminus Z|$ and the $n-3 = 29$ elements outside $Z$ are free, we obtain
\[
\sum_{\substack{I \supseteq Z}} t^{|I|} = t^3 (1+t)^{29}.
\]
Pulling this out gives
\[
\sum_{I \subseteq E} \widetilde{f}(I)\, t^{|I|}
= t^3 (1+t)^{29} \sum_{Z \in \binom{E}{3}} f(Z).
\]
Recall that
\[
\sum_{Z \in \binom{E}{3}} f(Z) = \widetilde{f}(E).
\]
Because the $E_i$ are disjoint, the sum splits into $4$ independent sums over $E_i$. For a single $E_i$, the number of $3$-subsets with $|Z \cap A_i| = x$ is $\binom{4}{x}\binom{4}{3-x}$. Hence
\begin{align*}
\widetilde{f}(E_i)
&= \binom{4}{0}\binom{4}{3}(2v_i) + \binom{4}{1}\binom{4}{2}(-v_i) + \binom{4}{2}\binom{4}{1}(v_i) + \binom{4}{3}\binom{4}{0}(-2v_i) \\[4pt]
&= 4(2v_i) + 24(-v_i) + 24(v_i) + 4(-2v_i) \\[4pt]
&= 8v_i - 24v_i + 24v_i - 8v_i \\[4pt]
&= 0.
\end{align*}
Summing over all $i$, we get
\[
\widetilde{f}(E) = 0.
\]
Substituting this back into our expectation formula yields
\[
\mathbb{E}[\mathcal{R}(z)]
= \left(\frac{z}{2}\right)^{\!3} \left(1+\frac{z}{2}\right)^{\!29} \cdot 0
= 0.
\]
This explicitly shows that the expectation vanishes because the sum of the harmonic function over all possible $3$-subsets is zero.

\medskip
\noindent\textit{Step 3: The covariance is non-zero.}
\smallskip

\noindent By Theorem \ref{thm:cov}, since $k=1$ and $r=1$, the $m=1$ term vanishes, leaving only $m=0$. Let
\[
c_0 = q^{r-0} = 2, \qquad \alpha = \frac{x}{2}, \qquad \beta = \frac{y}{2}.
\]
The covariance formula becomes
\[
\Cov
= {\qbinom{1}{1}}\, \alpha^3 \beta^3
\sum_{s=0}^3 \mathcal{P}_s\, 2^s (1+2\alpha)^{3-s} (1+2\beta)^{3-s} (1+\alpha+\beta+2\alpha\beta)^{32-6+s}.
\]

\noindent Because $f$ is a sum of disjointly supported functions $f_i$, the pair correlation $\mathcal{P}_s = \sum_{Z,W} f(Z) f(W)$ splits into independent sums over each $E_i$. Note that cross-block pairs (where $Z \subseteq E_i$ and $W \subseteq E_j$ with $i \neq j$) only contribute to $s=0$, and their total sum is proportional to $\widetilde{f}_i(E_i)\widetilde{f}_j(E_j) = 0$, so these cross terms vanish entirely.

\noindent To evaluate $\mathcal{P}_s^{(i)}$ for a single block $E_i$, we use a generating function. Consider the sum over all ordered pairs $(Z, W) \in \binom{E_i}{3}^2$ weighted by $t^{|Z \cap W|}$:
\[ \sum_{s=0}^3 \mathcal{P}_s^{(i)} t^s = \sum_{Z,W \subseteq E_i} f_i(Z) f_i(W) t^{|Z \cap W|}. \]
We apply the set-theoretic identity $t^{|Z \cap W|} = \sum_{U \subseteq Z \cap W} (t-1)^{|U|}$ and swap the order of summation:
\[ \sum_{s=0}^3 \mathcal{P}_s^{(i)} t^s = \sum_{U \subseteq E_i} (t-1)^{|U|} \left( \sum_{\substack{Z \subseteq E_i \\ Z \supseteq U}} f_i(Z) \right) \left( \sum_{\substack{W \subseteq E_i \\ W \supseteq U}} f_i(W) \right) = \sum_{U \subseteq E_i} (t-1)^{|U|} \left( \sum_{\substack{Z \subseteq E_i \\ Z \supseteq U}} f_i(Z) \right)^2. \]
Because $f_i$ is harmonic of degree 3, the inner sum vanishes for all $|U| < 3$. For $|U| = 3$, the only 3-subset $Z \subseteq E_i$ containing $U$ is $Z = U$ itself. Therefore, the sum collapses entirely to the $|U|=3$ terms:
\[ \sum_{s=0}^3 \mathcal{P}_s^{(i)} t^s = (t-1)^3 \sum_{U \in \binom{E_i}{3}} f_i(U)^2 = (t-1)^3 \mathcal{P}_3^{(i)}. \]
We calculate $\mathcal{P}_3^{(i)} = \sum_{U} f_i(U)^2$ using the definition of $f_i$:
\begin{align*}
    \mathcal{P}_3^{(i)} &= \binom{4}{0}\binom{4}{3}(2v_i)^2 + \binom{4}{1}\binom{4}{2}(-v_i)^2 + \binom{4}{2}\binom{4}{1}(v_i)^2 + \binom{4}{3}\binom{4}{0}(-2v_i)^2 \\
    &= 4(4v_i^2) + 24(v_i^2) + 24(v_i^2) + 4(4v_i^2) = 80v_i^2.
\end{align*}
Finally, we expand $(t-1)^3 = \sum_{s=0}^3 \binom{3}{s} t^s (-1)^{3-s}$. Equating the coefficients of $t^s$ yields the explicit formula:
\[ \mathcal{P}_s^{(i)} = 80 v_i^2 \binom{3}{s} (-1)^{3-s} = 80 v_i^2 \binom{3}{s} (-1)^{s+1}. \]

\noindent Substituting these into the sum for a single block gives
\[
\sum_{s=0}^3 \mathcal{P}_s^{(i)}\, 2^s (1+2\alpha)^{3-s} (1+2\beta)^{3-s} (1+\alpha+\beta+2\alpha\beta)^{26+s}.
\]
Now set
\[
A = 1+\alpha+\beta+2\alpha\beta, \qquad B = (1+2\alpha)(1+2\beta) = 1+2\alpha+2\beta+4\alpha\beta.
\]
Notice that $B = 2A - 1$. The sum becomes
\[
80 v_i^2 \Big[ -B^3 A^{26} + 6 A^{27} B^2 - 12 A^{28} B + 8 A^{29} \Big]
= 80 v_i^2 A^{26} \Big[ -B^3 + 6 A B^2 - 12 A^2 B + 8 A^3 \Big].
\]
The bracket is exactly $-(B - 2A)^3$. Since $B = 2A - 1$, we have
\[
B - 2A = -1,
\]
so
\[
-(B-2A)^3 = -(-1)^3 = 1.
\]
The sum simplifies beautifully to
\[
80 v_i^2 A^{26}.
\]
Summing over the $4$ independent sets $E_i$, the total covariance evaluates cleanly to
\[
\Cov
= 80(a^2 + b^2 + c^2 + d^2)\, \alpha^3 \beta^3 A^{26}.
\]
In terms of $x$ and $y$, this is
\[
\Cov
= 80(a^2 + b^2 + c^2 + d^2) \left(\frac{x}{2}\right)^{\!3} \left(\frac{y}{2}\right)^{\!3} \left(1+\frac{x}{2}+\frac{y}{2}+\frac{xy}{2}\right)^{\!26}.
\]

\noindent This demonstrates explicitly for $d=3$ that while the harmonic enumerator is a centered variable (mean zero), it possesses a strictly non-trivial second moment that evaluates cleanly via Theorem \ref{thm:cov}. As long as $a, b, c, d$ are not all zero, the covariance is strictly non-zero.
\end{example}

\section{Conclusion}

\noindent We have presented a combinatorial proof of the MacWilliams identity for harmonic higher weight distributions by adapting Simonis' method to the harmonic setting via reflection identity. Furthermore, we investigated the statistical moments of the harmonic enumerator for random linear codes. Natural continuations of this work include the analysis of higher moments beyond the second, the asymptotic behavior of the covariance as the code length grows, and numerical experiments for larger parameter regimes.

\section*{Acknowledgements}
This work was supported by SUST Research Centre (PS/2025/1/19).

\section*{Data availability statement}
The data that support the findings of this study are available from
the corresponding author.

\end{document}